\setlist[itemize]{leftmargin=*}
\setlist[enumerate]{leftmargin=*}
\begin{document}

\newtheorem{thm}{Theorem}
\newtheorem{prop}[thm]{Proposition}
\newtheorem{lem}[thm]{Lemma}
\newtheorem{cor}[thm]{Corollary}
\newtheorem*{que}{Question}
\newtheorem*{ass}{Assumption}
\theoremstyle{definition}
\newtheorem{defi}[thm]{Definition}
\newtheorem{ex}[thm]{Example}
\newtheorem{rem}[thm]{Remark}

\newcommand{\ob}{\mbox{ob}}
\newcommand{\mor}{\mbox{mor}}
\newcommand{\iso}{\mbox{Iso}}
\newcommand{\id}{\mbox{id}}
\newcommand{\G}{\mathcal {G}}
\newcommand{\U}{\mathcal {U}}
\newcommand{\N}{\mathbb {N}}
\newcommand{\si}{\sigma}
\newcommand{\rh}{\rho}
\newcommand{\ta}{\tau}
\newcommand{\rmod}{$R$\mbox{-mod}}
\newcommand{\grmod}{\mbox{$G$-$R$-mod}}
\newcommand{\HOM}{{\rm HOM}}
\newcommand{\Hom}{\mbox{Hom}}
\newcommand{\END}{{\rm END}}
\newcommand{\Ker}{\mbox{Ker}}
\newcommand{\Image}{\mbox{Im}}
\newcommand{\m}{^{-1}}

\newcommand{\GR}{\mbox{$\G$-$R$}}
\newcommand{\AbG}{\mbox{$\mbox{Ab}_{\G}$}}
\newcommand{\Ab}{\mbox{Ab}}
\newcommand{\rmd}{\mbox{$R$-md}}
\newcommand{\mdr}{\mbox{md-$R$}}
\newcommand{\rmds}{\mbox{$R$-md-$S$}}
\newcommand{\smod}{\mbox{$S$-mod}}
\newcommand{\rmods}{\mbox{$R$-mod-$S$}}
\newcommand{\modr}{\mbox{mod-$R$}}
\newcommand{\gsmod}{\mbox{$\G$-$S$-mod}}
\newcommand{\gmodr}{\mbox{$\G$-mod-$R$}}
\newcommand{\grmods}{\mbox{$\G$-$R$-mod-$S$}}
\newcommand{\grmodr}{\mbox{$\G$-$R$-mod-$R$}}
\newcommand{\gmods}{\mbox{$\G$-mod-$S$}}
\newcommand{\grmd}{\mbox{$\G$-$R$-md}}
\newcommand{\gsmd}{\mbox{$\G$-$S$-md}}
\newcommand{\gmdr}{\mbox{$\G$-md-$R$}}
\newcommand{\gmds}{\mbox{$\G$-md-$S$}}
\newcommand{\grmds}{\mbox{$\G$-$R$-md-$S$}}

\newcommand{\rumod}{R\text{-\textup{\textbf{umod}}}}
\newcommand{\sumod}{S\text{-\textup{\textbf{umod}}}}
\newcommand{\modgr}{\mbox{{\bf mod}-}\G\mbox{-$R$}}
\newcommand{\grumod}{\G\textup{-}\rumod}
\newcommand{\gsumod}{\G\textup{-}\sumod}
\newcommand{\umodgr}{\G\text{-\textup{\textbf{umod}}}\textup{-}R}
\newcommand{\umodgs}{\G\text{-\textup{\textbf{umod}}}\textup{-}S}

\title[Chain conditions for rings with enough Idempotents]{Chain conditions for rings with enough idempotents with applications to category graded rings}

\author[P. Lundstr\"{o}m]{Patrik Lundstr\"{o}m}
\address{University West,
Department of Engineering Science, 
SE-46186 Trollh\"{a}ttan, Sweden}
\email{{\scriptsize patrik.lundstrom@hv.se}}

\subjclass[2010]{16W50, %Graded rings and modules
16S35,%Twisted and skew group rings, crossed products
%12F10, %Separable extensions, Galois theory
%16H05, %Separable algebras
20L05 %Groupoids
}

%\keywords{graded ring; crossed product; separable extension; %groupoids}

\begin{abstract}
We obtain criteria for when a ring with enough idempotents
is left/right artinian or noetherian in terms 
of local criteria defined by the associated 
complete set of idempotents for the ring.
We apply these criteria to object unital 
category graded rings in general and, in particular, to 
the class of skew category algebras.
Thereby, we generalize results by Nastasescu-van Oystaeyen,
Bell, Park and Zelmanov from the group graded case to 
groupoid, and in some cases category, gradings. 
\end{abstract}

\maketitle

\section{Introduction}\label{section:introduction}

Throughout this article all rings are associative
but not necessarily unital.
If a ring $S$ is unital, then 
we always assume that $S$ is nonzero
and that $1_S$ denotes the multiplicative 
unit of $S$.
Recall that a ring is called left/right 
\emph{artinian (noetherian)} if it satisfies the the 
descending (ascending) 
chain condition on its poset of left/right ideals.
The main purpose of the present article is to establish
criteria for 
artiniarity and noetherianity for rings with enough idempotents
(see Theorem~\ref{thm:main}). The 
secondary purpose is to apply this result
to the setting of category graded rings 
and thereby obtaining similar results for this class
of rings (see Theorems 
\ref{thm:generalizationnas}-\ref{thm:generalizationzelmanov}). 

Here is an outline of the article.
In Section \ref{sec:chainconditionsenough},
we analyze chain conditions for the class of rings with enough 
idempotents, introduced by Fuller in \cite{Fu}
(see Definition \ref{def:enoughidempotents}).
To this end, we consider of rings
with complete sets of idempotents $\{ e_i \}_{i \in I}$ 
which are,
what we call, \emph{strong} (see Definition \ref{def:strong}).
By this we mean that for all $i,j \in I$ with 
$e_i S e_j$ nonzero or $e_j S e_i$ nonzero,
then $e_i \in e_i S e_j S e_i$ and
$e_j \in e_j S e_i S e_j$.
We establish the following:

\begin{thm}\label{thm:main}
Suppose $S$ is a ring with enough idempotents and
let $\{ e_i \}_{i \in I}$ be a complete 
set of idempotents for $S$.
\begin{enumerate}

\item If $S$ is left/right artinian (noetherian),
then $I$ is finite and for every $i \in I$ the 
ring $e_i S e_i$ is left/right artinian (noetherian).

\item Suppose $\{ e_i \}_{i \in I}$ is strong. 
Then $S$ is left/right artinian 
(noetherian) if and only if
$I$ is finite and for every $i \in I$ the ring
$e_i S e_i$ is left/right artinian (noetherian).

\end{enumerate}
\end{thm}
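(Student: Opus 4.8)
The plan is to treat the two implications separately, handling the left/right and the artinian/noetherian versions in parallel since they are formally dual. For part (1), I would exploit the defining decompositions $S = \bigoplus_{i \in I} Se_i = \bigoplus_{i \in I} e_iS$. If $I$ were infinite, I would fix a countable sequence $i_1, i_2, \dots$ of distinct indices and build explicit chains of left ideals: the strictly descending chain $L_n = \bigoplus_{i \in I \setminus \{i_1, \dots, i_{n-1}\}} Se_i$ contradicts the DCC, while the strictly ascending chain $\bigoplus_{k=1}^{n} Se_{i_k}$ contradicts the ACC, each link being proper because every $e_i$, and hence every $Se_i$, is nonzero. This forces $I$ to be finite, whereupon $e := \sum_{i \in I} e_i$ is a two-sided unit and $S$ is unital. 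The right-handed statements use the decomposition into the $e_iS$ instead.

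Still in part (1), to pass to the corner rings I would invoke the standard fact that $e_iSe_i$ inherits the chain conditions from $S$. Concretely, I would check that $\mathfrak{a} \mapsto S\mathfrak{a}$ is an injective, order-preserving map from the left ideals of $e_iSe_i$ to the left ideals of $S$, with one-sided inverse $\mathfrak{b} \mapsto e_i\mathfrak{b}$; the key identity is $e_i(S\mathfrak{a}) = e_iSe_i\mathfrak{a} = \mathfrak{a}$, valid because $\mathfrak{a} = e_i\mathfrak{a}$ and $\mathfrak{a}$ is a left ideal of the unital ring $e_iSe_i$. Any strict chain of left ideals of $e_iSe_i$ then produces a strict chain in $S$, so the chain conditions descend.

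For part (2), I would assume $I$ is finite (so $S$ is unital) and that every $e_iSe_i$ satisfies the relevant chain condition. Since $S = \bigoplus_{j \in I} Se_j$ is a finite direct sum of left ideals, and a finite direct sum of artinian (noetherian) modules is again artinian (noetherian), it suffices to prove that each $Se_j$ is an artinian (noetherian) left $S$-module. To this end I would study the order-preserving map $N \mapsto e_jN$ from the lattice of $S$-submodules $N \subseteq Se_j$ to the lattice of left ideals of $e_jSe_j$ (note $e_jN \subseteq e_jSe_j$ and $(e_jSe_j)(e_jN) \subseteq e_jN$). If this map is injective, then the DCC (ACC) on left ideals of $e_jSe_j$ transfers to the $S$-submodules of $Se_j$, finishing the proof.

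The crux, and the only place the strong hypothesis enters, is proving injectivity, i.e. that $N$ can be reconstructed as $N = Se_jN$. Writing $N = \sum_{i \in I} e_iN$ with $e_iN \subseteq e_iSe_j$, it suffices to show $e_iN \subseteq Se_jN$ for each $i$. If $e_iN \neq 0$ then $e_iSe_j \neq 0$, so strongness yields $e_i = \sum_k a_k b_k$ with $a_k \in e_iSe_j$ and $b_k \in e_jSe_i$; for $x \in e_iN$ we then have $x = e_ix = \sum_k a_k(b_kx)$, where each $b_kx \in e_jN$, so $x \in Se_jN$. This gives $N = Se_jN$ and hence injectivity. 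The right-handed case is symmetric, using the decomposition $S = \bigoplus_i e_iS$, the map $N \mapsto Ne_i$, and the relation $e_j \in e_jSe_iSe_j$. I expect the reconstruction identity $N = Se_jN$ to be the main obstacle, since it is precisely what fails without the strong condition and is what makes the converse in part (2) nontrivial.
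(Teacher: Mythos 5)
Your proposal is correct, and while it follows the same overall strategy as the paper (explicit chains to force $I$ finite, inheritance of chain conditions by the corner rings, and a lattice-embedding argument powered by the strong hypothesis for the converse), the middle and final steps are organized genuinely differently. For finiteness of $I$ you use exactly the paper's chains (Proposition \ref{prop:finite}). For the corner rings, the paper passes through the unital subring $S_0=\oplus_{i\in I}e_iSe_i$, a direct summand of $S$ as an $S_0$-module, and then splits $S_0$ as a finite product (Propositions \ref{prop:ringi} and \ref{prop:module}); you instead map left ideals of $e_iSe_i$ into left ideals of $S$ via $\mathfrak a\mapsto S\mathfrak a$ with retraction $\mathfrak b\mapsto e_i\mathfrak b$, which is self-contained and avoids the direct-summand lemma. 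For the converse in (2), the paper establishes a full lattice isomorphism between left ideals of $S_i$ and left $S_j$-submodules of $S_{ji}$ (Proposition \ref{prop:lattice}), deduces that each $S_{ji}$ is artinian/noetherian over $S_j$, then over $S_0$ by restriction of scalars, and finally sums the finitely many pieces $S_{kl}$ and changes rings back up to $S$. You use the coarser decomposition $S=\oplus_{j\in I}Se_j$ and need only an injective, strict-inclusion-preserving map $N\mapsto e_jN$ from $S$-submodules of $Se_j$ to left ideals of $e_jSe_j$; your reconstruction identity $N=Se_jN$, obtained by writing $e_i=\sum_k a_kb_k$ with $a_k\in e_iSe_j$ and $b_k\in e_jSe_i$, is precisely where strongness enters and is the same mechanism as the paper's computation $S_{ij}S_{ji}I=S_iI=I$. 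Your route buys a shorter argument that never leaves the category of $S$-modules and needs only one-sided invertibility of the lattice map; the paper's route buys the stronger structural fact that the two lattices are actually isomorphic, which it records separately as Proposition \ref{prop:lattice}.
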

In Section \ref{sec:homsetstrong},
we state our conventions on categories and 
groupoids (see Definition \ref{def:category}).
We also introduce the class of \emph{hom-set strong} categories
(see Definition \ref{def:homsetstrong}).
This is a class of categories which strictly 
contains the class of groupoids (see Example 
\ref{ex:notgroupoid}).

In Section \ref{sec:chaincategory}, we
consider the class of object unital category (and groupoid) 
graded rings 
(for our conventions on graded rings, see Definition \ref{def:categorygraded}).
We use Theorem \ref{thm:main} and results
from the group graded case by
Nastasescu-van Oystaeyen \cite{nastasescu2004} 
(see Theorem \ref{thm:nastasescu}) and 
Bell \cite{bell1987} (see Theorem \ref{thm:bell}),
to establish the following:

\begin{thm}\label{thm:generalizationnas}
Suppose $G$ is a groupoid and let 
$S$ be a ring which is strongly $G$-graded and object unital.
%and strongly $G$-graded.
\begin{enumerate}

\item Let $G$ be torsion-free. Then $S$ is 
left/right artinian if and only if $G_0$
is finite and for every $a \in G_0$ the ring
$S_a$ is left/right artinian and $S_{G(a)}$
is finitely generated as a left/right $S_a$-module.

\item Let $G$ be polycyclic-by-finite. 
Then $S$ is left/right noetherian if and only if
$G_0$ is finite and for every $a \in G_0$
the ring $S_a$ is left/right noetherian.  

\end{enumerate}
\end{thm}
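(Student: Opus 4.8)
The plan is to use Theorem~\ref{thm:main} to reduce the groupoid graded situation to a family of group graded rings indexed by the objects of $G$, and then to invoke the group graded results of Nastasescu--van Oystaeyen (Theorem~\ref{thm:nastasescu}) and Bell (Theorem~\ref{thm:bell}) on each member of this family. First I would record that, since $S$ is object unital, the family $\{ e_a \}_{a \in G_0}$ of identity elements $e_a := 1_{S_{1_a}}$ of the identity components forms a complete set of idempotents for $S$, so that $S$ is a ring with enough idempotents to which Theorem~\ref{thm:main} applies.

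Next I would compute the corner rings. For each $a \in G_0$, grading considerations show that $e_a S_g e_a$ vanishes unless $g$ is a morphism from $a$ to $a$, whence $e_a S e_a = \bigoplus_{g \in G(a)} S_g = S_{G(a)}$, where $G(a)$ denotes the vertex group at $a$. Moreover $S_{G(a)}$ is unital with unit $e_a$ and, being closed under multiplication via $S_g S_h = S_{gh}$ for loops $g,h$ at $a$, is strongly graded by the group $G(a)$. I would then verify that the set $\{ e_a \}_{a \in G_0}$ is strong in the sense of Definition~\ref{def:strong}: if $e_a S e_b \neq 0$ (or $e_b S e_a \neq 0$) then some $S_g$ with $g \colon b \to a$ is nonzero, and strong gradation over the groupoid $G$ gives $S_g S_{g\m} = S_{1_a}$ and $S_{g\m} S_g = S_{1_b}$, so that $e_a \in e_a S e_b S e_a$ and $e_b \in e_b S e_a S e_b$; when $a,b$ lie in different connected components both off-diagonal corners vanish and the condition is vacuous.

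With these two points in place, Theorem~\ref{thm:main}(2) yields that $S$ is left/right artinian (noetherian) if and only if $G_0$ is finite and $S_{G(a)}$ is left/right artinian (noetherian) for every $a \in G_0$. For part (1), I would note that torsion-freeness of $G$ passes to each vertex group $G(a)$, and apply Theorem~\ref{thm:nastasescu} to the strongly $G(a)$-graded ring $S_{G(a)}$: it is left/right artinian precisely when its identity component $S_a = S_{1_a}$ is left/right artinian and $S_{G(a)}$ is finitely generated as a left/right $S_a$-module. Substituting this characterization into the reduction gives (1). For part (2), polycyclic-by-finiteness of $G$ likewise descends to each $G(a)$, and Theorem~\ref{thm:bell} shows that $S_{G(a)}$ is left/right noetherian if and only if $S_a$ is; substituting gives (2).

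The main obstacle I anticipate is the bookkeeping around the corner rings: one must check carefully that $e_a S e_a$ is not merely a corner but is genuinely a unital, strongly $G(a)$-graded ring, so that the group graded theorems apply verbatim, and that the module-finiteness condition produced by Theorem~\ref{thm:nastasescu} for $S_{G(a)}$ over $S_a$ is literally the condition appearing in the statement. Verifying the strong condition and the passage of the torsion-free and polycyclic-by-finite hypotheses to the vertex groups should be comparatively routine once the grading conventions are fixed.
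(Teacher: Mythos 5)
Your proposal is correct and follows essentially the same route as the paper: it verifies that $\{1_{S_a}\}_{a\in G_0}$ is a strong complete set of idempotents using $S_gS_{g^{-1}}=S_{c(g)}$ (this is the content of Proposition~\ref{prop:groupoidimplies} combined with Proposition~\ref{prop:equivalent3}), applies Theorem~\ref{thm:main} to reduce to the corner rings $S_{G(a)}$ (Theorem~\ref{thm:propertiesagain}), and then invokes Theorems~\ref{thm:nastasescu} and~\ref{thm:bell} on each unital, strongly $G(a)$-graded ring $S_{G(a)}$. The only difference is that the paper packages the strongness verification through the intermediate notion of hom-set-strongly graded rings, while you check it directly; the substance is identical.
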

For the definitions of the classes of torsion-free
and polycyclic-by-finite groupoids, see
Definitions \ref{def:torsionfree} and 
\ref{def:polycyclicbyfinite}.

In Section \ref{sec:chainskew}, we analyse
chain conditions for skew category algebras
which are defined by skew category systems of unital rings
(see Definition \ref{def:skewcategory}).
Using a result by Park \cite{park1979}
(see Theorem \ref{thm:park})
from the setting of skew group rings,
and the results established in Section \ref{sec:chaincategory}, 
we prove the following: 

\begin{thm}\label{thm:generalizationpark}
Suppose $G$ is a groupoid and let
$\alpha = \{ \alpha_g : R_{d(g)} 
\to R_{c(g)} \}_{g \in G_1}$ be a skew category system
of unital rings. 
Then the associated skew category algebra
$R *_\alpha G$ is left/right artinian  
if and only if $G$ is finite and for 
for every $a \in G_0$ the ring $R_a$ is 
is left/right artinian.
\end{thm}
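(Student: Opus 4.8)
The plan is to realize $S := R *_\alpha G$ as a ring with enough idempotents and then apply Theorem~\ref{thm:main} together with Park's theorem (Theorem~\ref{thm:park}). For each object $a \in G_0$ let $e_a$ denote the identity of the unital ring $R_a$, regarded inside the homogeneous component of $S$ indexed by the identity morphism $1_a$. First I would record that $\{e_a\}_{a \in G_0}$ is a complete set of idempotents for $S$, which should follow directly from object unitality together with the grading. The key structural point is the identification of the diagonal corners: for each $a$ one has $e_a S e_a = \bigoplus_{g \in G(a)} S_g$, where $G(a)$ is the vertex group at $a$, and this is precisely the skew group ring $R_a *_\alpha G(a)$ attached to the restricted skew system $\{\alpha_g : R_a \to R_a\}_{g \in G(a)}$, each $\alpha_g$ being an automorphism because $G$ is a groupoid. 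Thus Park's result will be applicable to every corner.

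Next I would verify that the complete set $\{e_a\}_{a \in G_0}$ is strong in the sense of Definition~\ref{def:strong}. For a morphism $g : b \to a$ the component $S_g$ is generated over $R_a$ by an invertible element $u_g$ with $u_g u_{g^{-1}} = e_a$ and $u_{g^{-1}} u_g = e_b$. Since $S_g \subseteq e_a S e_b$ and $S_{g^{-1}} \subseteq e_b S e_a$, this gives $e_a = u_g u_{g^{-1}} \in e_a S e_b S e_a$ and, symmetrically, $e_b \in e_b S e_a S e_b$. Whenever $e_a S e_b$ or $e_b S e_a$ is nonzero there is such a morphism between $a$ and $b$, so the strongness condition holds for all relevant pairs.

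With these preparations the two implications follow. For the forward direction, if $S$ is left/right artinian then Theorem~\ref{thm:main}(1) yields that $G_0$ is finite and each corner $e_a S e_a = R_a *_\alpha G(a)$ is left/right artinian; Park's theorem then forces $R_a$ to be left/right artinian and each vertex group $G(a)$ to be finite. To pass from this to finiteness of $G$ I would use the groupoid structure: for $a,b \in G_0$ the hom-set from $a$ to $b$ is either empty or a torsor over $G(b)$, so its cardinality is $|G(b)|$ when nonempty, whence $|G_1| \le |G_0|^2 \max_b |G(b)| < \infty$. For the converse, if $G$ is finite and every $R_a$ is left/right artinian, then $G_0$ is finite and each $G(a)$ is finite, so Park's theorem makes every corner $e_a S e_a$ left/right artinian; since $\{e_a\}_{a \in G_0}$ is strong, Theorem~\ref{thm:main}(2) then delivers that $S$ is left/right artinian.

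The routine but essential verifications are the isomorphism $e_a S e_a \cong R_a *_\alpha G(a)$ and the invertibility of the homogeneous generators $u_g$. I expect these to be the main obstacle, since they require unwinding the definition of a skew category system and confirming that its restriction to a single vertex group recovers an honest skew group ring with $G(a)$ acting by automorphisms, which is exactly the hypothesis under which Park's theorem applies.
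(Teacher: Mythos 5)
Your proposal is correct and follows essentially the same route as the paper: the paper also reduces to Theorem~\ref{thm:main} via the strong complete set of idempotents $\{1_{R_a}a\}_{a\in G_0}$ (packaged through Propositions~\ref{prop:objectagain}, \ref{prop:localskew} and \ref{prop:finitecategory}), identifies the corners $e_aSe_a$ with the skew group rings $R_a*_{\alpha(a)}G(a)$, and invokes Park's theorem. Your direct verifications of strongness via $u_gu_{g^{-1}}=e_a$ and of the finiteness of $G_1$ via the torsor structure of the hom-sets are exactly the content of the paper's intermediate ``hom-set strong'' machinery, unwound for the groupoid case.
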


Note that Theorem \ref{thm:generalizationpark}
already was obtained in \cite{nystedt2018}
using a different method. 
In Section \ref{sec:chainskew},
we also show the following theorem,
using a result by Zelmanov \cite{zelmanov1977} 
(see Theorem \ref{thm:zelmanov})
and the results in Section \ref{sec:chaincategory}.

\begin{thm}\label{thm:generalizationzelmanov}
Suppose $G$ is a hom-set strong category and let
$T$ be a unital ring. Then 
the associated category ring $T[G]$ is left/right artinian 
if and only if $T$ is left/right 
artinian and $G$ is finite.
\end{thm}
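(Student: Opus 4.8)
The plan is to reduce Theorem~\ref{thm:generalizationzelmanov} to an application of the main theorem, Theorem~\ref{thm:main}, together with Zelmanov's result (Theorem~\ref{thm:zelmanov}) for skew group rings (here specialized to the trivial action, i.e.\ ordinary group rings). First I would record that a category ring $T[G]$ is naturally a ring with enough idempotents: since $G$ is a hom-set strong category, the identity morphisms $\{ 1_a \}_{a \in G_0}$, viewed inside $T[G]$, form a complete set of orthogonal idempotents $\{ e_a \}_{a \in G_0}$, with $e_a = 1_T \cdot 1_a$. I would verify that $e_a T[G] e_b$ is the free $T$-module on the hom-set $G(b,a)$ of morphisms from $b$ to $a$, and in particular that $e_a T[G] e_a$ is precisely the category ring $T[G(a)]$ on the endomorphism monoid $G(a) = G(a,a)$ of the object $a$. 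When $G$ is hom-set strong, each $G(a)$ is a group, so $e_a T[G] e_a \cong T[G(a)]$ is a group ring.

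Next I would establish that the hom-set strong hypothesis forces the complete set $\{ e_a \}_{a \in G_0}$ to be \emph{strong} in the sense of Definition~\ref{def:strong}. Concretely, if $e_a T[G] e_b \neq 0$, then there is a morphism $g \in G(b,a)$; because $G$ is hom-set strong this morphism should be invertible (or at least split in the hom-set sense guaranteed by Definition~\ref{def:homsetstrong}), so there is $h \in G(a,b)$ with $hg = 1_b$ and $gh = 1_a$. Translating to $T[G]$, this yields $e_a = (1_T g)(1_T h) \in e_a T[G] e_b T[G] e_a$ and symmetrically $e_b \in e_b T[G] e_a T[G] e_b$, which is exactly strongness. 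This is the step where the precise formulation of ``hom-set strong'' must be used, and it is the conceptual heart of the argument.

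With strongness in hand, part~(2) of Theorem~\ref{thm:main} applies: $T[G]$ is left/right artinian if and only if $G_0$ is finite and each local ring $e_a T[G] e_a \cong T[G(a)]$ is left/right artinian. Now I would invoke Zelmanov's theorem (Theorem~\ref{thm:zelmanov}) applied to each group ring $T[G(a)]$: it states that $T[G(a)]$ is left/right artinian precisely when $T$ is left/right artinian and $G(a)$ is finite. Combining, $T[G]$ is artinian iff $G_0$ is finite, $T$ is artinian, and every automorphism group $G(a)$ is finite. Finally I would check that these conditions are equivalent to ``$T$ is left/right artinian and $G$ is finite'': the finiteness of $G$ (as a category, i.e.\ of $G_1$) is equivalent to $G_0$ being finite together with each hom-set being finite, and hom-set strongness lets me bound each hom-set $G(a,b)$ by $|G(a)|$ (translation by a fixed invertible morphism gives a bijection $G(a,b) \cong G(a)$ when $G(a,b)$ is nonempty), so finiteness of all the groups $G(a)$ plus finiteness of $G_0$ yields finiteness of $G_1$, and conversely.

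The main obstacle I anticipate is the bookkeeping in the last paragraph: translating between ``each endomorphism group $G(a)$ is finite'' and ``the whole morphism set $G_1$ is finite'' requires the hom-set strong structure to identify every nonempty hom-set with a copy of some endomorphism group, and one must be careful that $G_0$ finite plus each $G(a)$ finite genuinely controls all of the (finitely many) hom-sets $G(a,b)$. A secondary technical point is confirming the ring isomorphism $e_a T[G] e_a \cong T[G(a)]$ and that multiplication in $T[G]$ restricts correctly, but this should be routine from Definition~\ref{def:categorygraded}. Everything else is a direct assembly of Theorem~\ref{thm:main}(2) and Theorem~\ref{thm:zelmanov}.
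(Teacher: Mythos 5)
Your overall strategy is the paper's own: verify that $\{ 1_T a \}_{a \in G_0}$ is a strong complete set of idempotents, apply Theorem \ref{thm:main}(2) to reduce to the corner rings $e_a T[G] e_a \cong T[G(a)]$, invoke Theorem \ref{thm:zelmanov}, and convert ``$G_0$ finite and each $G(a)$ finite'' into ``$G$ finite'' (this is Propositions \ref{prop:objectagain}, \ref{prop:localskew}, \ref{prop:finitecategory} and \ref{prop:Gfinitenecessary} in the paper). However, your argument repeatedly relies on a premise that is false: that in a hom-set strong category every morphism is invertible and every endomorphism monoid $G(a)$ is a group. Example \ref{ex:notgroupoid} is precisely a hom-set strong category built from an arbitrary monoid $M$, in which $G(a) \cong M$ need not be a group and a given $g \in G(b,a)$ need not have any inverse, let alone a two-sided one. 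Definition \ref{def:homsetstrong} only guarantees that whenever $G(a,b)$ or $G(b,a)$ is nonempty, both are nonempty and one can find \emph{some} $p \in G(a,b)$ and $q \in G(b,a)$ with $pq = a$ (and, by symmetry, some $p' \in G(b,a)$, $q' \in G(a,b)$ with $p'q' = b$); it does not give $qp = b$, nor anything about the particular morphism $g$ you started with.

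Each place you used the false premise can be repaired with the weaker, true statement. Strongness of the idempotents needs only $e_a \in e_a S e_b S e_a$ and $e_b \in e_b S e_a S e_b$, which follow from the two one-sided splittings $pq = a$ and $p'q' = b$; this is exactly how Proposition \ref{prop:objectagain} argues. For finiteness, the splitting $pq = c$ gives an injection $G(c,d) \to G(c)$, $g \mapsto qg$ --- not a bijection, but an injection suffices to bound $|G(c,d)|$ by $|G(c)|$ (Proposition \ref{prop:finitecategory}). And, crucially, Theorem \ref{thm:zelmanov} is Zelmanov's result on \emph{monoid} algebras, not the trivial-action specialization of Park's skew group ring theorem you describe, so it applies to $T[G(a)]$ even though $G(a)$ is only a monoid. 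As written, though, your proof establishes the theorem only in the case where $G$ behaves like a groupoid, which would make the statement no more general than Theorem \ref{thm:generalizationnas}; the entire point of the hom-set strong hypothesis is that it also covers categories such as the one in Example \ref{ex:notgroupoid}. You should rewrite the second and fourth paragraphs to use only the one-sided splittings that Definition \ref{def:homsetstrong} actually provides.
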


Theorems 
\ref{thm:generalizationnas}-\ref{thm:generalizationzelmanov} generalize classical results from the group graded case to 
the groupoid (and in the case of the last theorem 
category) graded situation.

\section{Rings with enough idempotents}\label{sec:chainconditionsenough}

In this section, we analyze chain conditions 
for the class of rings with 
\linebreak
enough idempotents. 
To this end, we introduce the class of rings with 
\linebreak
complete sets of idempotents which are,
what we call, \emph{strong}.
At the end of this section, we prove Theorem \ref{thm:main}. 
Throughout this article, we put 
$\mathbb{N} := \{ 1,2,3,\ldots \}$.
For the rest of the section,
$R$ and $S$ denote rings.
For the convenience of the 
reader, we first gather some well known results that 
we will use later.

\begin{prop}\label{prop:module} 
The following assertions hold.
\begin{enumerate}

\item Suppose $M$ is a left/right $S$-module
and let $R \subseteq S$.
If $M$ is artinian (noetherian) as a left/right $R$-module, 
then $M$ is artinian 
(noetherian) as a 
left/right $S$-module.

\item Suppose $R$ is a direct summand of the left/right 
$R$-module $S$.
Then, for any right/left ideal $I$ of $R$,
the equality $IS \cap R = IR $ (or $SI \cap R = RI$) holds.
In particular, if $S$ is right/left artinian (noetherian) 
and $R$ is unital, then $R$ is right/left artinian (noetherian).

\item Suppose $n \in \mathbb{N}$ and let
$R_1,\ldots,R_n$ be rings.
Then $R_1 \times \cdots \times R_n$ is left/right
artinian (noetherian) if and only if all of the rings
$R_1,\ldots,R_n$ are left/right artinian (noetherian).

\item Suppose $R$ is unital and let
$M$ be a left (right) $R$-module.
Let $n \in \mathbb{N}$ and suppose 
$M_1,\ldots,M_n$ are left/right $R$-submodules of $M$
such that $M = M_1 \oplus \cdots \oplus M_n$.
Then $M$ is left/right artinian (noetherian) if and only if 
all of the modules $M_1,\ldots,M_n$ are
left/right artinian (noetherian).

\end{enumerate}
\end{prop}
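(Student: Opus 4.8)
The plan is to prove the four assertions of Proposition~\ref{prop:module} essentially independently, since they are standard module-theoretic facts, and then to use them freely in the rest of the paper.

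For part~(1), I would argue directly from the definitions of the chain conditions. Any $S$-submodule chain of $M$ is in particular a chain of $R$-submodules, since the $R$-action is the restriction of the $S$-action. Hence any descending (ascending) chain of $S$-submodules is a descending (ascending) chain of $R$-submodules, and if the latter must stabilize, so must the former. No real obstacle here; the only point to state carefully is that an $S$-submodule is automatically an $R$-submodule because $R \subseteq S$.

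For part~(2), I would first establish the modular-type identity $IS \cap R = IR$. The inclusion $IR \subseteq IS \cap R$ is immediate: $IR \subseteq R$, and $IR \subseteq IS$ since $R \subseteq S$. For the reverse inclusion I would use the hypothesis that $R$ is a direct summand of $S$ as a left/right $R$-module, so there is an $R$-module decomposition $S = R \oplus C$ and a corresponding $R$-linear projection $\pi : S \to R$ which restricts to the identity on $R$. Given $x \in IS \cap R$, write $x = \sum_k a_k s_k$ with $a_k \in I \subseteq R$ and $s_k \in S$; applying $\pi$ and using $R$-linearity together with $\pi(x) = x$ (as $x \in R$) yields $x = \sum_k a_k \pi(s_k) \in IR$, giving the identity. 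The ``in particular'' statement then follows: when $R$ is unital, $IR = I$, so the map $I \mapsto IS$ from right ideals of $R$ to right ideals of $S$ is injective and order-preserving (with $IS \cap R = I$ recovering $I$), whence a descending (ascending) chain of ideals of $R$ injects into one of $S$ and must stabilize if $S$ satisfies the corresponding chain condition. I expect the direct-summand-to-projection step to be the main technical point, but it is routine once the projection is introduced.

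For parts~(3) and~(4), I would reduce to the two-factor case by induction and then to the statement that, for a short exact sequence $0 \to M' \to M \to M'' \to 0$ of modules, $M$ is artinian (noetherian) if and only if both $M'$ and $M''$ are. For part~(4), viewing $M = M_1 \oplus \cdots \oplus M_n$, one has the exact sequence with $M' = M_1$ and $M'' = M_2 \oplus \cdots \oplus M_n$, and induction on $n$ finishes the argument; the base case $n=1$ is trivial. Part~(3) is the special case of part~(4) in which each ring $R_k$ is regarded as a module over $R := R_1 \times \cdots \times R_n$ (or directly over itself), together with the observation that an ideal of the product ring decomposes as a direct sum of ideals of the factors, so the chain conditions match up componentwise; the unitality of the factors guarantees these decompositions. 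The only care needed is to phrase the short-exact-sequence lemma in the left/right and artinian/noetherian versions uniformly, which presents no genuine difficulty.
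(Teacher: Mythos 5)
The paper does not actually prove Proposition~\ref{prop:module}: its ``proof'' is a citation to a standard reference (Rowen), so there is no argument of the author's to compare yours against; what you have written is the standard textbook proof, and parts (1), (2) and (4) are correct as you give them. In (1) the observation that an $S$-submodule is automatically an $R$-submodule is exactly the point; in (2) the projection argument $x = \pi(x) = \sum_k a_k \pi(s_k) \in IR$ is the standard proof (note the projection must be linear over $R$ acting on the side opposite to the ideal, which is precisely how the left/right slashes in the statement pair up), and the passage to chain conditions via the order-embedding $I \mapsto IS$ with $IS \cap R = I$ is fine. The one wrinkle is in part (3): the paper's standing convention is that rings are not necessarily unital, so you cannot literally reduce (3) to (4) (which assumes $R$ unital), and your auxiliary claim that ideals of a product ring decompose componentwise also fails without units. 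This is easily repaired, and your short-exact-sequence lemma already does the job: view $R = R_1 \times \cdots \times R_n$ as a left module over itself; then $R_1 \times \{0\} \times \cdots \times \{0\}$ is a submodule whose $R$-submodules are precisely the left ideals of $R_1$, and the quotient is $R_2 \times \cdots \times R_n$ with $R$ acting through the projection, so induction together with the exact-sequence lemma gives both directions with no unitality hypothesis. (In the only place the paper invokes part (3), namely Proposition~\ref{prop:ringi}, the factors $S_i = e_i S e_i$ are unital anyway, so nothing downstream is affected either way.)
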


\begin{proof}
See a standard book on the theory of rings and
modules e.g. \cite{rowen1998}.
\end{proof}

\begin{defi}\label{def:enoughidempotents}
Recall from \cite{Fu} that $S$ is said to have  
\emph{enough idempotents} if there exists a set 
$\{ e_i \}_{i \in I}$ of nonzero orthogonal idempotents in $S$, 
called a \emph{complete set of idempotents} for $S$, 
such that $S = \oplus_{i\in I} S e_i = 
\oplus_{i \in I} e_i S$.
For the rest of this section, 
$S$ denotes a ring with enough
idempotents where $\{ e_i \}_{i \in I}$ is a fixed complete set 
of idempotents for $S$. 
Given $i,j \in I$, we use the notation 
$S_{ij} := e_i S e_j$ and $S_i := S_{ii}$.
We also put $S_0 := \oplus_{i \in I} S_i$.
\end{defi}

\begin{prop}\label{prop:finite}
If $S$ is left/right artinian (noetherian), then $I$ is finite.
\end{prop}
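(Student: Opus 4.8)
The plan is to prove the contrapositive: if $I$ is infinite, then $S$ fails every one of the four chain conditions, so in particular it cannot be left/right artinian or noetherian. Since $S = \oplus_{i \in I} S e_i = \oplus_{i \in I} e_i S$, an infinite index set $I$ gives us an infinite internal direct sum decomposition of $S$ as both a left and a right module over itself. The idea is to use the summands $e_i S$ (for right ideals) and $S e_i$ (for left ideals) as building blocks for infinite strictly monotone chains of one-sided ideals.

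\medskip

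First I would fix an enumeration of a countably infinite subset $\{ i_1, i_2, i_3, \ldots \}$ of $I$; this suffices, since it is enough to contradict a single chain condition. For the ascending (noetherian) case, consider the right ideals $A_n := e_{i_1} S \oplus \cdots \oplus e_{i_n} S$. Because the sum $\oplus_{i \in I} e_i S$ is direct and each $e_{i_n}$ is a nonzero idempotent, we have $e_{i_{n+1}} \in A_{n+1}$ but $e_{i_{n+1}} \notin A_n$ (directness of the sum forces $A_n \cap e_{i_{n+1}} S = 0$, and $e_{i_{n+1}} = e_{i_{n+1}} e_{i_{n+1}} \in e_{i_{n+1}} S$ is nonzero). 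Hence $A_1 \subsetneq A_2 \subsetneq \cdots$ is a strictly ascending chain of right ideals, so $S$ is not right noetherian. The symmetric argument with the left ideals $S e_{i_1} \oplus \cdots \oplus S e_{i_n}$ shows $S$ is not left noetherian.

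\medskip

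For the descending (artinian) case, I would build a strictly descending chain from the complementary tails. Set $B_n := \oplus_{m \geq n} e_{i_m} S \,\oplus\, \bigl( \oplus_{i \in I \setminus \{ i_1, i_2, \ldots \}} e_i S \bigr)$, i.e.\ the right ideal obtained by discarding the first $n-1$ of the chosen summands. Again by directness of the decomposition $S = \oplus_{i \in I} e_i S$, each $B_n$ is a right ideal with $e_{i_n} \in B_n \setminus B_{n+1}$, giving a strictly descending chain $B_1 \supsetneq B_2 \supsetneq \cdots$; thus $S$ is not right artinian, and the left-handed version settles the left artinian case. Combining all four sub-arguments, an infinite $I$ is incompatible with any of the four chain conditions, which is the contrapositive of the claim.

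\medskip

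I do not expect a serious obstacle here: the entire argument rests on the fact that the defining direct-sum decomposition has one nonzero summand per index, so infinitely many indices directly produce infinite independent families of submodules. The only point requiring a moment's care is verifying that each $e_{i_n}$ genuinely lies outside the relevant neighbouring ideal --- this is where orthogonality and the \emph{directness} of the sum (rather than a mere spanning condition) are essential, since it is directness that guarantees $A_n \cap e_{i_{n+1}} S = 0$. One could alternatively phrase the whole proof module-theoretically: an infinite direct sum of nonzero modules satisfies neither chain condition, and $S$ is such a sum over itself on each side, so the result is immediate. I would present the explicit chains above since they make the failure of each specific condition transparent.
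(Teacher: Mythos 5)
Your proof is correct and follows essentially the same strategy as the paper: the paper likewise builds strictly ascending chains from partial sums $Se_1 + \cdots + Se_i$ (resp.\ $e_1S + \cdots + e_iS$) and strictly descending chains from the tails $Se_i + Se_{i+1} + \cdots$ (resp.\ $e_iS + e_{i+1}S + \cdots$), using the fact that $e_i$ lies in exactly one term of the chain by directness of the decomposition. Your explicit justification that $A_n \cap e_{i_{n+1}}S = 0$ is a point the paper leaves implicit, but the argument is the same.
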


\begin{proof}
Suppose $I$ is infinite. Then we may assume that 
$\mathbb{N} \subseteq I$.

Define a set of left ideals $\{ I_i \}_{i \in \mathbb{N}}$
of $S$ by $I_i = S e_i + S e_{i+1} + S e_{i+2} + \cdots$
for $i \in \mathbb{N}$.
Then $e_i \in I_i \setminus I_{i+1}$ and thus
$I_i \supsetneq I_{i+1}$ for all $i \in \mathbb{N}$.
Therefore $S$ is not left artinian. 

Define a set of right ideals $\{ J_i \}_{i \in \mathbb{N}}$
of $S$ by $J_i = e_i S + e_{i+1} S + e_{i+2} S + \cdots$
for $i \in \mathbb{N}$.
Then $e_i \in J_i \setminus J_{i+1}$ and thus
$J_i \supsetneq J_{i+1}$ for all $i \in \mathbb{N}$.
Therefore $S$ is not right artinian.

Define a set of left ideals 
$\{ K_i \}_{i \in \mathbb{N}}$ of $S$
by $K_i = S e_1 + S e_2 + \cdots + S e_i$
for $i \in \mathbb{N}$.
Then $e_{i+1} \in K_{i+1} \setminus K_i$ and thus
$K_i \subsetneq K_{i+1}$ for all $i \in \mathbb{N}$.
Therefore $S$ is not left noetherian.

Define a set of right ideals 
$\{ L_i \}_{i \in \mathbb{N}}$ of $S$
by $L_i = e_1 S + e_2 S + \cdots + e_i S$
for $i \in \mathbb{N}$.
Then $e_{i+1} \in L_{i+1} \setminus L_i$ and thus
$L_i \subsetneq L_{i+1}$ for all $i \in \mathbb{N}$.
Therefore $S$ is not right noetherian.
\end{proof}

\begin{prop}\label{prop:ringi}
Suppose $S$ is left/right artinian (noetherian)
and $i \in I$. Then the ring $S_i$ 
is left/right artinian (noetherian).
\end{prop}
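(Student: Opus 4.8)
The plan is to fix $i \in I$ and exhibit an injective, strictly-inclusion-preserving map from the poset of left (right) ideals of the ring $S_i = e_i S e_i$ into the poset of left (right) ideals of $S$. Note first that $e_i$ is a two-sided identity for $S_i$, so $S_i$ is a genuine unital ring whose ideal theory we may analyze. Once such a map is produced, an infinite strictly descending (ascending) chain of ideals in $S_i$ would transport to one in $S$, contradicting that $S$ is left/right artinian (noetherian); this disposes of all four cases simultaneously.

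Concretely, for the left case I would send a left ideal $\mathfrak{a}$ of $S_i$ to the left ideal $S\mathfrak{a}$ of $S$ (finite sums $\sum_k s_k a_k$ with $s_k \in S$, $a_k \in \mathfrak{a}$). The crux of the argument is the retraction identity $e_i (S\mathfrak{a}) e_i = \mathfrak{a}$, and this is the step I expect to require the most care. The inclusion $\mathfrak{a} \subseteq e_i (S\mathfrak{a}) e_i$ is immediate: for $a \in \mathfrak{a} \subseteq e_i S e_i$ one has $a = e_i a e_i = e_i (e_i a) e_i$, and since $e_i \in S$ and $e_i a \in S\mathfrak{a}$, this displays $a$ as an element of $e_i(S\mathfrak{a})e_i$. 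For the reverse inclusion, a typical term $e_i s a e_i$ (with $s \in S$, $a \in \mathfrak{a}$) satisfies $e_i s a e_i = e_i s a = e_i s e_i a = (e_i s e_i)\, a$, using the relations $a e_i = a = e_i a$ that hold because $\mathfrak{a} \subseteq e_i S e_i$; as $e_i s e_i \in S_i$ and $\mathfrak{a}$ is a left ideal of $S_i$, this lies in $\mathfrak{a}$.

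Granting the identity $e_i(S\mathfrak{a})e_i = \mathfrak{a}$, the assignment $\mathfrak{a} \mapsto S\mathfrak{a}$ is visibly order preserving, and it is injective, since $S\mathfrak{a} = S\mathfrak{b}$ forces $\mathfrak{a} = e_i(S\mathfrak{a})e_i = e_i(S\mathfrak{b})e_i = \mathfrak{b}$. Hence a strict inclusion $\mathfrak{a} \subsetneq \mathfrak{b}$ yields $S\mathfrak{a} \subsetneq S\mathfrak{b}$, so any infinite strictly descending (ascending) chain of left ideals of $S_i$ produces one of left ideals of $S$. Because $S$ is left artinian (noetherian), no such chain in $S$ exists, whence $S_i$ is left artinian (noetherian).

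The right-handed statements follow by the mirror-image construction, sending a right ideal $\mathfrak{a}$ of $S_i$ to $\mathfrak{a}S$ and verifying $e_i(\mathfrak{a}S)e_i = \mathfrak{a}$ by the same manipulation of the idempotent relations. The only subtlety worth flagging is the non-unitality of $S$, but it is harmless here: both $S\mathfrak{a}$ and the retraction are defined purely through finite sums and the relations $e_i a = a = a e_i$, none of which require a global identity for $S$.
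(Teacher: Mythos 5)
Your proof is correct, but it takes a genuinely different route from the paper. The paper first invokes Proposition~\ref{prop:finite} to conclude that $I$ is finite, so that $S_0=\oplus_{j\in I}S_j$ is a unital ring with identity $\sum_{j\in I}e_j$; it then observes that $S_0$ is a direct summand of $S$ as an $S_0$-module, applies Proposition~\ref{prop:module}(2) to transfer the chain condition from $S$ to $S_0$, and finally applies Proposition~\ref{prop:module}(3) to the finite product $S_0=S_{i_1}\times\cdots\times S_{i_n}$ to isolate each $S_i$. You instead give a direct poset-embedding argument: the assignment $\mathfrak{a}\mapsto S\mathfrak{a}$ together with the retraction identity $e_i(S\mathfrak{a})e_i=\mathfrak{a}$ (which you verify correctly, the key point being that $e_isae_i=(e_ise_i)a\in S_i\mathfrak{a}\subseteq\mathfrak{a}$ since $e_ia=ae_i=a$ for $a\in\mathfrak{a}\subseteq e_iSe_i$) transports any infinite strictly monotone chain of one-sided ideals of $S_i$ to one in $S$. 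This is the classical ``corner ring'' argument that $eSe$ inherits chain conditions from $S$ for an arbitrary idempotent $e$; it is more self-contained than the paper's proof and, notably, does not use the finiteness of $I$ at all, whereas the paper's route makes Proposition~\ref{prop:ringi} logically dependent on Proposition~\ref{prop:finite}. The paper's approach buys brevity by recycling the standard module-theoretic facts collected in Proposition~\ref{prop:module}; yours buys independence and slightly greater generality.
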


\begin{proof}
From Proposition \ref{prop:finite} it follows that 
$S_0$ is unital with $1_{S_0} = \sum_{j \in I} e_j$.
Since $S_0$ is a direct summand
of $S$ as a right/left $S_0$-module it follows
from Proposition \ref{prop:module}(2) that $S_0$
is left/right artinian (noetherian).
By Proposition \ref{prop:module}(3) the
ring $S_i$ is left/right artinian (noetherian).
\end{proof}

\begin{prop}\label{prop:equivalentIII}
Given $S$ and $\{ e_i \}_{i \in I}$
the following properties are equivalent:
\begin{enumerate}

\item $\forall (i,j,k) \in I \times I \times I$ 
if two of the additive groups
$S_{ij}$, $S_{jk}$ and $S_{ik}$ are nonzero,
then the third one is also nonzero and 
$S_{ij} S_{jk} = S_{ik}$;

\item $\forall (p,q) \in I \times I$ 
if one of the additive groups $S_{pq}$
and $S_{qp}$ is nonzero, then the other one is also
nonzero and $S_{pq} S_{qp} = S_p$;

\item  $\forall (p,q) \in I \times I$ 
if one of the additive groups $S_{pq}$
and $S_{qp}$ is nonzero, then the other one is also
nonzero and $e_p \in S_{pq} S_{qp}$.

\end{enumerate}
\end{prop}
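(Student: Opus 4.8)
The plan is to prove the implications $(1)\Rightarrow(2)$, $(2)\Leftrightarrow(3)$, and $(2)\Rightarrow(1)$, which together give all the equivalences. Throughout I will use two elementary observations. First, orthogonality of the idempotents yields $S_{ij}S_{kl}=\{0\}$ when $j\neq k$ and $S_{ij}S_{jk}\subseteq S_{ik}$ in general, so in every case only the reverse inclusion and the nonvanishing of the relevant group carry content. Second, for each $i\in I$ the group $S_i=e_iSe_i$ is a ring with multiplicative identity $e_i$, and since $e_i\neq 0$ we have $S_i\neq\{0\}$ for every $i$; moreover $e_i\in S_i$, so $(2)$ is exactly the statement that $e_i$ can be written as a finite sum of products from $S_{ij}$ and $S_{ji}$.

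The implication $(2)\Leftrightarrow(3)$ is quick. Since $e_p\in S_p$, condition $(2)$ immediately gives $(3)$. Conversely, assuming $(3)$, write $e_p=\sum_\alpha a_\alpha b_\alpha$ with $a_\alpha\in S_{pq}$ and $b_\alpha\in S_{qp}$; then for any $s\in S_p$ one has $s=e_ps=\sum_\alpha a_\alpha(b_\alpha s)$, where $b_\alpha s\in S_{qp}S_p\subseteq S_{qp}$, so $s\in S_{pq}S_{qp}$. Together with the reverse inclusion this yields $S_{pq}S_{qp}=S_p$, which is $(2)$. The implication $(1)\Rightarrow(2)$ is also short: apply $(1)$ to the triple $(p,q,p)$, whose associated groups are $S_{pq}$, $S_{qp}$ and $S_{pp}=S_p$. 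Since $S_p\neq\{0\}$ always, if one of $S_{pq},S_{qp}$ is nonzero then two of the three groups are nonzero, so the remaining one is nonzero and $S_{pq}S_{qp}=S_{pp}=S_p$, which is $(2)$.

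The substance of the proposition is the implication $(2)\Rightarrow(1)$, and the key step is the following lemma extracted from $(2)$: if $S_{ij}\neq\{0\}$, then $S_{ij}S_{jk}=S_{ik}$ for \emph{every} $k\in I$. To prove it, use $(2)$ to write $e_i=\sum_\alpha a_\alpha b_\alpha$ with $a_\alpha\in S_{ij}$ and $b_\alpha\in S_{ji}$; then for $x\in S_{ik}$ we get $x=e_ix=\sum_\alpha a_\alpha(b_\alpha x)$ with $b_\alpha x\in S_{ji}S_{ik}\subseteq S_{jk}$, so $S_{ik}\subseteq S_{ij}S_{jk}$ and equality follows from the containment above. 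Separately I will record a nonvanishing fact: if $S_{ij}\neq\{0\}$ and $S_{jk}\neq\{0\}$, then $S_{ik}\neq\{0\}$. Indeed, by $(2)$ we may write $e_j=\sum_\beta c_\beta d_\beta$ with $c_\beta\in S_{jk}$ and $d_\beta\in S_{kj}$; choosing $0\neq a\in S_{ij}$ gives $a=ae_j=\sum_\beta(ac_\beta)d_\beta$ with $ac_\beta\in S_{ij}S_{jk}\subseteq S_{ik}$, so $S_{ik}=\{0\}$ would force $a=0$, a contradiction.

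With these two facts the three cases of $(1)$ fall out, using also that under $(2)$ the condition $S_{ab}\neq\{0\}$ is equivalent to $S_{ba}\neq\{0\}$. If $S_{ij}$ and $S_{jk}$ are nonzero, the nonvanishing fact gives $S_{ik}\neq\{0\}$ and the lemma gives the product identity. If $S_{ij}$ and $S_{ik}$ are nonzero, the lemma applied to $S_{ij}\neq\{0\}$ already yields $S_{ij}S_{jk}=S_{ik}\neq\{0\}$, whence $S_{jk}\neq\{0\}$. If $S_{jk}$ and $S_{ik}$ are nonzero, then $S_{kj}\neq\{0\}$ by $(2)$, and the nonvanishing fact applied to the pair $S_{ik},S_{kj}$ gives $S_{ij}=S_{ik}S_{kj}\neq\{0\}$ (the identity $S_{ik}S_{kj}=S_{ij}$ being the lemma applied to $S_{ik}\neq\{0\}$); the desired equality $S_{ij}S_{jk}=S_{ik}$ then follows from the lemma since $S_{ij}\neq\{0\}$. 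The main obstacle is this last implication: the product formula $S_{ij}S_{jk}=S_{ik}$ comes almost for free once the lemma is in hand, but verifying that the third group is genuinely nonzero in each case is where the argument needs care, and it is precisely there that the full strength of $(2)$, rather than a mere containment, is used.
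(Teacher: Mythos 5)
Your proof is correct and follows essentially the same route as the paper: the same three implications $(1)\Rightarrow(2)$, $(2)\Rightarrow(1)$, $(2)\Leftrightarrow(3)$, with the key mechanism in $(2)\Rightarrow(1)$ being the insertion of the decomposition $S_p=S_{pq}S_{qp}$ to absorb products, which the paper carries out with subset products ($S_{ik}=S_{ik}S_k=S_{ik}S_{kj}S_{jk}\subseteq S_{ij}S_{jk}$) and you carry out element-wise by expanding $e_i$. Your packaging of this as a standalone lemma, and your explicit element-wise check of $(3)\Rightarrow(2)$ (which the paper dismisses as clear), are minor presentational improvements rather than a different argument.
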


\begin{proof}
(1)$\Rightarrow$(2): 
Suppose (1) holds and take $(p,q) \in I \times I$.
\begin{itemize}
\item Case 1: $S_{pq} \neq \{ 0 \}$.
Since $S_p \neq \{ 0 \}$ it follows from (1),
with $i = p$, $j = q$ and $k = p$, that 
$S_{qp} \neq \{ 0 \}$ and $S_{pq} S_{qp} = S_p$.

\item Case 2: $G(q,p) \neq\{ 0 \}$.
Since $S_q \neq \{ 0 \}$ it follows from (1),
with $i = q$, $j = p$ and $k = q$, that 
$S_{pq} \neq \{ 0 \}$ and $S_{qp} S_{pq} = S_q$.
\end{itemize}
(2)$\Rightarrow$(1): Suppose (2) holds and 
take $(i,j,k) \in I \times I \times I$.
\begin{itemize}
\item Case 1: $S_{ij} \neq \{ 0 \}$ and
$S_{jk} \neq \{ 0 \}$. 
%Seeking a contradiction, suppose that $S_{ik} = \{ 0 \}$.
By (2) we get that
$\{ 0 \} \neq S_j = S_j S_j = 
S_{ji} S_{ij} S_{jk} S_{kj} \subseteq S_{ji} S_{ik} S_{kj}$.
Therefore $S_{ik} \neq \{ 0 \}$. 
By (2) again we get that
$S_{ik} = S_{ik} S_k = S_{ik} S_{kj} S_{jk}
\subseteq S_{ij} S_{jk} \subseteq S_{ik}$.
Thus $S_{ij} S_{jk} = S_{ik}$.

\item Case 2: $S_{ij} \neq \{ 0 \}$ and
$S_{ik} \neq \{ 0 \}$.
By (2) we get that
$\{ 0 \} \neq S_i = S_i S_i = 
S_{ij} S_{ji} S_{ik} S_{ki} \subseteq S_{ij} S_{jk} S_{ki}$.
Therefore $S_{jk} \neq \{ 0 \}$. 
The same calculation as in Case 1 shows that
$S_{ij} S_{jk} = S_{ik}$.

\item Case 3: $S_{ik} \neq \{ 0 \}$ and
$S_{jk} \neq \{ 0 \}$.
By (2) we get that
$\{ 0 \} \neq S_k = S_k S_k = 
S_{ki} S_{ik} S_{kj} S_{jk} \subseteq S_{ki} S_{ij} S_{jk}$.
Therefore $S_{jk} \neq \{ 0 \}$. 
The same calculation as in Case 1 shows that
$S_{ij} S_{jk} = S_{ik}$.
\end{itemize}
(2)$\Leftrightarrow$(3): This is clear since
$e_p \in S_p$ for all $p \in I$.
\end{proof}

\begin{defi}\label{def:strong}
If $S$ and $\{ e_i \}_{i \in I}$ satisfy any of the 
three equivalent properties 
\linebreak
in 
Proposition \ref{prop:equivalentIII}, then we 
say that $\{ e_i \}_{i \in I}$ is a \emph{strong}
complete set of 
\linebreak
idempotents for $S$.
\end{defi}

\begin{prop}\label{prop:lattice}
Let $\{ e_i \}_{i \in I}$ be a strong complete 
set of idempotents for $S$.  
Suppose $S_{ij} \neq \{ 0 \}$
for some $i,j \in I$.
\begin{enumerate}

\item The poset of left (right) ideals of 
$S_i$ ($S_j$) is isomorphic to the 
poset of left (right) $S_j$-submodules 
of $S_{ji}$ ($S_{ij}$).

\item The ring $S_i$ 
($S_j$) is left (right) artinian/noetherian
if and only if the left 
$S_j$-module (right $S_i$-module) $S_{ji}$
is artinian/noetherian.

\end{enumerate}
\end{prop}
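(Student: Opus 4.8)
The plan is to produce, in the left-hand case, an explicit inclusion-preserving bijection between the poset of left ideals of $S_i$ and the poset of left $S_j$-submodules of $S_{ji}$; part (2) then follows at once, and the right-hand case is handled symmetrically. Throughout I will use that, since $\{e_i\}_{i\in I}$ is strong and $S_{ij}\neq\{0\}$, Proposition \ref{prop:equivalentIII}(2) supplies the relations
\[
S_{ij}S_{ji}=S_i \qquad\text{and}\qquad S_{ji}S_{ij}=S_j ,
\]
and that $e_i$ and $e_j$ are the identity elements of the unital rings $S_i$ and $S_j$, while $S_{ji}$ is an $(S_j,S_i)$-bimodule and $S_{ij}$ an $(S_i,S_j)$-bimodule. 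These relations exhibit $(S_i,S_j,S_{ij},S_{ji})$ as a Morita context with surjective pairings, which is the structural reason the construction below succeeds.

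First I would introduce the two candidate maps: send a left ideal $L$ of $S_i$ to $S_{ji}L$, and send a left $S_j$-submodule $N$ of $S_{ji}$ to $S_{ij}N$. A routine check confirms they are well defined and land in the correct posets. Indeed $S_{ji}L\subseteq S_{ji}$ is stable under left multiplication by $S_j$ because $S_jS_{ji}\subseteq S_{ji}$, so $S_{ji}L$ is a left $S_j$-submodule of $S_{ji}$; and $S_{ij}N\subseteq S_{ij}S_{ji}=S_i$ is stable under left multiplication by $S_i$ because $S_iS_{ij}\subseteq S_{ij}$, so $S_{ij}N$ is a left ideal of $S_i$. Both assignments are visibly order preserving.

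The decisive step, and the only place where strongness is really needed, is to verify that the two maps are mutually inverse. Here I would compute, using $S_{ij}S_{ji}=S_i$ and the fact that $e_i$ acts as identity on $L$, that $S_{ij}(S_{ji}L)=S_iL=L$; and, using $S_{ji}S_{ij}=S_j$ together with the observation that every element of $N\subseteq S_{ji}=e_jSe_i$ is fixed by left multiplication by $e_j$ (so that $S_jN=N$), that $S_{ji}(S_{ij}N)=S_jN=N$. This shows the two maps are inverse order isomorphisms, which is exactly the content of part (1) in the left-hand case. The right-hand case is proved by the entirely analogous argument using right multiplication in place of left multiplication (equivalently, by applying the left-hand case to the opposite ring $S^{\mathrm{op}}$, which is again a ring with enough idempotents whose complete set of idempotents is strong, and which interchanges the left and right one-sided structures).

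Finally, part (2) is an immediate consequence of part (1). An order isomorphism of posets carries the descending (ascending) chain condition on one poset to the same condition on the other. Since $S_i$ is left artinian (noetherian) exactly when its poset of left ideals satisfies the descending (ascending) chain condition, and the module $S_{ji}$ is artinian (noetherian) exactly when its poset of left $S_j$-submodules satisfies the corresponding chain condition, the stated equivalence follows, and symmetrically on the right. I expect the only genuine obstacle to be the inverse-map computation above; the remaining verifications are routine bookkeeping with the bimodule relations, all of which are guaranteed by Definition \ref{def:strong} via Proposition \ref{prop:equivalentIII}.
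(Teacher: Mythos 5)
Your proposal is correct and follows essentially the same route as the paper: the paper defines the mutually inverse maps $\alpha(I)=e_jSI$ and $\beta(M)=e_iSM$, which coincide with your $L\mapsto S_{ji}L$ and $N\mapsto S_{ij}N$ since $I\subseteq e_iSe_i$ and $M\subseteq e_jSe_i$, and verifies $\beta(\alpha(I))=S_{ij}S_{ji}I=S_iI=I$ and $\alpha(\beta(M))=S_{ji}S_{ij}M=S_jM=M$ exactly as you do, with (2) deduced from (1). Your added remarks on the Morita-context viewpoint and the reduction of the right-hand case to $S^{\mathrm{op}}$ are correct but not needed beyond what the paper's symmetric argument already gives.
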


\begin{proof}
We show the ``left'' part of the proof and leave the 
``right'' part to the reader. 
Since (2) obviously follows from (1) it is 
enough to show (1). 
\linebreak
Define maps
$
\alpha : \{ \mbox{left ideals of $S_i$} \} 
\to
\{ \mbox{left $S_j$-submodules of $S_{ji}$} \}
$ and
\linebreak
$
\beta : \{ \mbox{left $S_j$-submodules of $S_{ji}$} \}
\to
\{ \mbox{left ideals of $S_i$} \}
$
by $\alpha(I) = e_j S I$ and $\beta(M) = e_i S M$
for left ideals $I$ of $S_i$ and 
left $S_j$-submodules $M$ of $S_{ji}$.
Then, clearly, $\alpha$ and $\beta$ are 
well defined inclusion preserving maps. 
Take a left ideal $I$ of $S_i$ and a
left $S_j$-submodule $M$ of $S_{ji}$.
By Proposition \ref{prop:equivalentIII} we get that
$
\beta( \alpha (I) ) = \beta( e_j S I )
= S_{ij} S_{ji} I = S_i I = I
$ 
and
$
\alpha( \beta(M) ) = \alpha( e_i S M )
= S_{ji} S_{ij} M = S_j M = M
$.
This proves (1).
\end{proof}

\begin{prop}\label{prop:IF}
Let $\{ e_i \}_{i \in I}$ be a finite strong complete 
set of idempotents 
\linebreak
for $S$.
Suppose that % $I$ is finite and that
for every $i \in I$ the ring $S_i$ is
left/right artinian 
\linebreak
(noetherian).
Then $S$ is left/right artinian (noetherian). 
\end{prop}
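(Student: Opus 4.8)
The plan is to exploit the finiteness of $I$ to make $S$ into a unital ring and then to peel off the module structure one layer at a time, reducing the chain conditions on $S$ to those on the corner rings $S_i$ via Propositions \ref{prop:module} and \ref{prop:lattice}. I only treat the ``left'' statements; the ``right'' statements follow by the symmetric argument (using $e_i S = \oplus_{j \in I} S_{ij}$ in place of $S e_i = \oplus_{j \in I} S_{ji}$). First I would observe that, since $I$ is finite, the element $1_S := \sum_{i \in I} e_i$ is a two-sided identity for $S$: writing an arbitrary $s \in S$ in the decompositions $S = \oplus_{i} e_i S = \oplus_i S e_i$ and using orthogonality of the $e_i$ shows $1_S s = s = s 1_S$. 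In particular $S$ is unital, and $S_0 = \oplus_{i \in I} S_i$ is a unital subring of $S$ with the same identity. Hence it suffices to prove that $S$, regarded as a left module over itself, is artinian (noetherian).

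Next I would use the decomposition $S = \oplus_{i \in I} S e_i$ into left $S$-submodules. Since $S$ is unital and $I$ is finite, Proposition \ref{prop:module}(4) reduces the problem to showing that each $S e_i$ is artinian (noetherian) as a left $S$-module. Fix $i \in I$. By Proposition \ref{prop:module}(1), applied to the subring $S_0 \subseteq S$, it is enough to prove the stronger statement that $S e_i$ is artinian (noetherian) as a left $S_0$-module. Now $S e_i = \oplus_{j \in I} S_{ji}$, and I would check that this is a decomposition into left $S_0$-submodules: by orthogonality one has $S_k S_{ji} = \{ 0 \}$ for $k \neq j$ and $S_j S_{ji} \subseteq S_{ji}$, so $S_0$ acts on the summand $S_{ji}$ only through its factor $S_j$. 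Applying Proposition \ref{prop:module}(4) again (with $R = S_0$), I reduce to showing that each $S_{ji}$ is artinian (noetherian) as a left $S_0$-module. Since the $S_k$ with $k \neq j$ annihilate $S_{ji}$, the left $S_0$-submodules of $S_{ji}$ are precisely its left $S_j$-submodules, so this is the same as showing that $S_{ji}$ is artinian (noetherian) as a left $S_j$-module.

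Finally I would dispose of the individual modules $S_{ji}$. If $S_{ji} = \{ 0 \}$ there is nothing to prove. If $S_{ji} \neq \{ 0 \}$, then strongness (Proposition \ref{prop:equivalentIII}) forces $S_{ij} \neq \{ 0 \}$, so Proposition \ref{prop:lattice}(2) is available: it identifies the chain conditions on $S_{ji}$ as a left $S_j$-module with the chain conditions on the left ideals of $S_i$. By hypothesis $S_i$ is left artinian (noetherian), so $S_{ji}$ is artinian (noetherian) as a left $S_j$-module, as required. Tracing back through the successive reductions then completes the left case.

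The main obstacle, and the only place where real care is needed, is keeping the three module structures --- over $S$, over $S_0$, and over the corner ring $S_j$ --- straight and invoking each cited result in the correct direction. In particular one must use Proposition \ref{prop:module}(1) in the direction ``chain conditions over the smaller ring $S_0$ imply chain conditions over the larger ring $S$'', verify that the summands $S_{ji}$ genuinely are $S_0$-submodules so that Proposition \ref{prop:module}(4) applies, and match the indices in Proposition \ref{prop:lattice} so that the hypothesis $S_{ij} \neq \{ 0 \}$ (equivalent to $S_{ji} \neq \{ 0 \}$ by strongness) is precisely the one actually supplied. Once these bookkeeping points are settled, each reduction is routine.
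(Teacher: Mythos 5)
Your proof is correct and follows essentially the same route as the paper's: reduce via Proposition \ref{prop:lattice}(2) to chain conditions on each $S_{ji}$ as a left $S_j$-module, pass to $S_0$ by Proposition \ref{prop:module}(1), assemble the finite direct sum by Proposition \ref{prop:module}(4), and pass up to $S$ by Proposition \ref{prop:module}(1) again. Your version is slightly more careful than the paper's in two harmless respects --- you perform the decomposition in two stages and you explicitly dispose of the summands with $S_{ji} = \{0\}$, where Proposition \ref{prop:lattice} does not apply but the conclusion is trivial.
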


\begin{proof}
We show the ``left'' part of the proof and leave the 
``right'' part to the reader. 
Take $i,j \in I$. 
By Proposition \ref{prop:lattice}(2) it follows
that the left $S_j$-module $S_{ji}$ is
artinian (noetherian).
By Proposition \ref{prop:module}(1) it follows that 
$S_{ji}$ is artinian (noetherian) as a left $S_0$-module.
Since $I$ is finite, it follows from 
Proposition \ref{prop:module}(4) that 
$S = \oplus_{k,l \in I} S_{kl}$ is artinian
(noetherian) as a left $S_0$-module.
By Proposition \ref{prop:module}(1),
$S$ is left artinian (noetherian).
\end{proof}

\subsubsection*{Proof of Theorem \ref{thm:main}}
The ``only if'' statements follow from Proposition
\ref{prop:finite} and Proposition \ref{prop:ringi}.
The ''if'' statement follows from Proposition \ref{prop:IF}.
\qed

\section{Hom-set strong categories}\label{sec:homsetstrong}

In this section, we state our conventions on categories 
and groupoids. 
\linebreak
We introduce the class of 
hom-set strong categories and
we show that this
class strictly contains the class of groupoids.
At the end of the section, we show a finiteness
result for hom-set strong categories.

\begin{defi}\label{def:category}
For the rest of this article, unless otherwise stated, 
we let $G$ 
denote a small category.
Recall that this means that the 
collection of objects in $G$, denoted by $G_0$, 
and the collection of morphisms in $G$,
denoted by $G_1$, are sets.
The domain and codomain of $g \in G_1$
is denoted by $d(g)$ and $c(g)$ respectively;
we indicate this by writing $g : d(g) \to c(g)$.
Given $a,b \in G_0$ we let $G(a,b)$ denote the 
set of morphisms $b \to a$. We put $G(a) := G(a,a)$
for $a \in G_0$.
We always regard $G_0$ as a subset of $G_1$.
Therefore, we denote the identity morphism 
$a \to a$ by $a$.
We put $G_2 := \{ (g,h) \in G_1 \times
G_1 \mid d(g) = c(h) \}$.
If $(g,h) \in G_2$, then the composition of $g$ and $h$ 
is written as $gh$. 
We say that $G$ is finite if $G_1$, and hence also $G_0$,
is finite.
Recall that a category $G$ is said to be a \emph{groupoid}
if all morphisms in $G$ are isomorphisms. 
In that case, the inverse of a morphism $g : a \to b$
in $G$ is denoted by $g^{-1} : b \to a$.
\end{defi}

\begin{prop}\label{prop:equivalentcategory}
For a category $G$, the following properties are equivalent:
\begin{enumerate}

\item $\forall (a,b,c) \in G_0 \times G_0 \times G_0$ 
if two of the sets
$G(a,b)$, $G(b,c)$ and $G(a,c)$ are nonempty,
then the third set is nonempty and 
$G(a,b) G(b,c) = G(a,c)$;

\item $\forall (x,y) \in G_0 \times G_0$ 
if one of the sets $G(x,y)$
and $G(y,x)$ is nonempty, then the other set is 
nonempty and $G(x,y) G(y,x) = G(x)$;

\item $\forall (x,y) \in G_0 \times G_0$ 
if one of the sets $G(x,y)$
and $G(y,x)$ is nonempty, then the other set is 
nonempty and $x \in G(x,y) G(y,x)$.

\end{enumerate}
\end{prop}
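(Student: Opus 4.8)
The plan is to prove Proposition~\ref{prop:equivalentcategory} by mirroring the structure of the proof of Proposition~\ref{prop:equivalentIII}, since the two statements are formally identical once one replaces the additive groups $S_{ij}$ by the hom-sets $G(a,b)$, the product $S_{ij}S_{jk}$ by the compositional product $G(a,b)G(b,c)$, the zero condition ``$S_{ij}\neq\{0\}$'' by the nonemptiness condition ``$G(a,b)\neq\varnothing$'', and the fact that $e_p\in S_p$ by the fact that the identity $x\in G(x)$. The key structural facts I will exploit are that composition in a category is associative, that each $G(x)$ contains the identity morphism $x$, and that $G(x)G(x)\supseteq G(x)$ since $x\cdot f=f\cdot x=f$ for every endomorphism $f$ of $x$. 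I would establish the three implications in the cycle $(1)\Rightarrow(2)\Rightarrow(1)$ and $(2)\Leftrightarrow(3)$.

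First I would prove $(1)\Rightarrow(2)$: assuming (1), fix $(x,y)\in G_0\times G_0$ and split into two symmetric cases according to whether $G(x,y)$ or $G(y,x)$ is nonempty. In the first case, I apply (1) with $(a,b,c)=(x,y,x)$, using that $G(x)=G(x,x)$ is automatically nonempty (it contains the identity $x$), so that two of the three sets $G(x,y),G(y,x),G(x)$ are nonempty; then (1) yields that the third, $G(y,x)$, is nonempty and that $G(x,y)G(y,x)=G(x,x)=G(x)$. The second case is entirely symmetric with $(a,b,c)=(y,x,y)$. For $(2)\Rightarrow(1)$, I fix $(a,b,c)$ and treat the three symmetric cases of which pair among $G(a,b),G(b,c),G(a,c)$ is nonempty. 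In each case I first argue nonemptiness of the remaining hom-set by producing a nonempty composite: for instance, if $G(a,b)$ and $G(b,c)$ are both nonempty, then $G(a,b)G(b,c)\subseteq G(a,c)$ is nonempty by composability, so $G(a,c)\neq\varnothing$. Then the equality $G(a,b)G(b,c)=G(a,c)$ follows from a double-inclusion computation: one inclusion is immediate from composition, and the reverse inclusion uses (2) to write $G(a,c)=G(a,c)G(c)=G(a,c)G(c,b)G(b,c)\subseteq G(a,b)G(b,c)$, which parallels the identity-insertion trick in the ring proof. Finally $(2)\Leftrightarrow(3)$ is immediate because $G(x)G(y,x)\ldots$ — more precisely, since $x\in G(x)$, the condition $G(x,y)G(y,x)=G(x)$ implies $x\in G(x,y)G(y,x)$, and conversely $x\in G(x,y)G(y,x)$ together with the general inclusion $G(x,y)G(y,x)\subseteq G(x)$ forces equality, because $G(x)=xG(x)\subseteq G(x,y)G(y,x)G(x)\subseteq G(x,y)G(y,x)$.

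I expect the main obstacle to be the bookkeeping in the reverse-inclusion steps, where one must correctly insert identities and invoke (2) to convert $G(c)$ or $G(a)$ into a product of two hom-sets, taking care that all composites that appear are actually defined (domain matches codomain) and that the nonemptiness hypotheses needed to apply (2) are genuinely available at each stage. This is the same delicate point as in Proposition~\ref{prop:equivalentIII}, and the careful tracking of which of the several hom-sets have already been shown nonempty before reapplying (2) is where an error is most likely to slip in; the arguments are otherwise routine translations of the ring-theoretic proof into categorical language. No deep idea beyond associativity and the presence of identities is required, so the difficulty is organizational rather than conceptual.
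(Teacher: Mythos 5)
Your proposal is correct and follows essentially the same route as the paper: the cycle $(1)\Rightarrow(2)$ via the specializations $(a,b,c)=(x,y,x)$ and $(y,x,y)$, $(2)\Rightarrow(1)$ via the identity-insertion computation $G(a,c)=G(a,c)G(c,b)G(b,c)\subseteq G(a,b)G(b,c)\subseteq G(a,c)$ split into the three symmetric nonemptiness cases, and $(2)\Leftrightarrow(3)$ from $x\in G(x)$. Your justification of $(3)\Rightarrow(2)$ is in fact slightly more explicit than the paper's one-line remark, but the argument is the same.
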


\begin{proof}
(1)$\Rightarrow$(2): 
Suppose (1) holds and take $(x,y) \in G_0 \times G_0$.
\begin{itemize}
\item Case 1: $G(x,y) \neq \emptyset$.
Since $G(x,x) \neq \emptyset$ it follows from (1),
with $a = x$, $b = y$ and $c = a$, that 
$G(y,x) \neq \emptyset$ and $G(x,y)G(y,x) = G(x)$.

\item Case 2: $G(y,x) \neq \emptyset$.
Since $G(y,y) \neq \emptyset$ it follows from (1),
with $a = y$, $b = x$ and $c = y$, that 
$G(x,y) \neq \emptyset$ and $G(y,x)G(x,y) = G(y)$.
\end{itemize}
(2)$\Rightarrow$(1): Suppose (2) holds and 
take $(a,b,c) \in G_0 \times G_0 \times G_0$.
\begin{itemize}
\item Case 1: $G(a,b) \neq \emptyset$ and
$G(b,c) \neq \emptyset$. Then
$G(a,c) \supseteq G(a,b)G(b,c) \neq \emptyset$
so that $G(a,c) \neq \emptyset$.
From (2), with $x = c$ and $y = b$, we get that
$G(c,b) \neq \emptyset$ and
$G(c,b)G(b,c) = G(c)$. Thus
$
G(a,c) = G(a,c)G(c) = G(a,c)G(c,b)G(b,c)
\subseteq G(a,b)G(b,c) \subseteq G(a,c)
\Rightarrow G(a,b)G(b,c) = G(a,c)$.

\item Case 2: $G(a,b) \neq \emptyset$ and
$G(a,c) \neq \emptyset$.
From (2), with $x = a$ and $y = b$, it follows that
$G(b,a) \neq \emptyset$. Thus
$G(b,c) \supseteq G(b,a)G(a,c) \neq \emptyset$
and hence $G(b,c) \neq \emptyset$. 
A calculation similar to Case 1 gives
$G(a,b)G(b,c) = G(a,c)$.

\item Case 3: $G(a,c) \neq \emptyset$ and
$G(b,c) \neq \emptyset$.
From (2), with $x = c$ and $y = b$, it follows that
$G(b,c) \neq \emptyset$.
Thus $G(a,b) \supseteq G(a,c)G(c,b) \neq \emptyset$
and hence $G(a,b) \neq \emptyset$. 
A calculation similar to Case 1 gives
$G(a,b)G(b,c) = G(a,c)$.
\end{itemize}
(2)$\Leftrightarrow$(3): This is clear since
$x \in G(x)$ for all $x \in G_0$.
\end{proof}

\begin{defi}\label{def:homsetstrong}
If a category $G$ satisfies any of the equivalent properties
in Proposition \ref{prop:equivalentcategory},
then we say that $G$ is a \emph{hom-set strong} category.
\end{defi}

\begin{prop}\label{prop:groupoidimplieshom}
If $G$ is a groupoid, then $G$ is a hom-set strong 
category.
\end{prop}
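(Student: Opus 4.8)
The plan is to verify the definition of hom-set strong directly, using property~(3) from Proposition~\ref{prop:equivalentcategory}, since in a groupoid this is the most transparent condition to check. First I would take an arbitrary pair $(x,y) \in G_0 \times G_0$ and assume, without loss of generality, that $G(x,y) \neq \emptyset$; by symmetry the case $G(y,x) \neq \emptyset$ is handled identically after swapping the roles of $x$ and $y$. The key observation is that the groupoid structure gives us inverses: if $g \in G(x,y)$ is a morphism $y \to x$, then $g^{-1} \in G(y,x)$ is a morphism $x \to y$, which immediately shows that $G(y,x)$ is nonempty as well.

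Next I would establish the composition equality $x \in G(x,y)G(y,x)$. With $g \in G(x,y)$ as above, the composite $g g^{-1}$ is the identity morphism on $x$, which in the notation of Definition~\ref{def:category} is precisely the element $x \in G_1$. Since $g \in G(x,y)$ and $g^{-1} \in G(y,x)$, the product $g g^{-1} = x$ lies in the set $G(x,y)G(y,x)$, giving exactly the membership required by property~(3). This completes the verification that $G$ satisfies property~(3), hence is hom-set strong by Definition~\ref{def:homsetstrong}.

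I do not expect any serious obstacle here; the result is essentially immediate from the existence of inverses in a groupoid. The only point requiring a little care is the bookkeeping of domains and codomains: one must keep straight that $G(a,b)$ denotes morphisms $b \to a$ (the reversed convention in Definition~\ref{def:category}), so that a morphism in $G(x,y)$ composes on the left with a morphism in $G(y,x)$ to land in $G(x,x) = G(x)$. Once the direction conventions are respected, the identity $g g^{-1} = x$ does all the work, and the symmetric case adds nothing new. A one- or two-sentence proof should suffice, possibly just remarking that property~(3) holds because every morphism has an inverse whose composite with it is an identity.
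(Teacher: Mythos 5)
Your proof is correct and matches the paper's argument exactly: both verify condition (3) of Proposition~\ref{prop:equivalentcategory} by observing that for $g \in G(x,y)$ the inverse $g^{-1}$ lies in $G(y,x)$ and $g g^{-1} = x$. Your extra care with the reversed hom-set convention ($G(a,b)$ being morphisms $b \to a$) is appropriate and consistent with the paper.
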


\begin{proof}
This follows from Proposition \ref{prop:equivalentcategory}(3)
and the fact that for every 
\linebreak
morphism $g : b \to a$ in $G$ 
the relation $a = g g^{-1} \in G(a,b)G(b,a)$ holds. 
\end{proof}
 
\begin{ex}\label{ex:notgroupoid}
Not all hom-set strong categories are groupoids.
Indeed, let $M$ be a monoid with identity element $1$.
Take a nonempty set $X$. We now define the category $MX$
in the following way. As set of objects in $MX$ we take $X$.
As morphisms in $MX$ we take all triples $g = (m,x,y)$,
for $x,y \in X$, where we put $d(g) = y$ and $c(g)=x$.
The composition of $g = (m,x,y)$ with $h = (n,y,z)$,
for $m,n \in M$ and $x,y,x \in X$, is defined as
$gh = (mn , x , z)$. Then, for all $x,y \in X$, 
the set $MX(x,y)$ is nonempty and 
$(1,x,x) = (1,x,y)(1,y,x) \in MX(x,y) MX(y,x)$.
Thus, by Proposition \ref{prop:equivalentcategory}(3),
it follows that $MX$ is a hom-set strong category.
On the other hand, it is easily checked that
$MX$ is a groupoid if and only if $M$ is a group.
\end{ex}

\begin{prop}\label{prop:finitecategory}
Suppose $G$ is a hom-set strong category.
Then $G$ is finite if and only if $G_0$
is finite and for every $a \in G_0$ the monoid 
$G(a)$ is finite.
\end{prop}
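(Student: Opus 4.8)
The plan is to prove both directions of the equivalence, noting that the forward direction is trivial and all the real content lies in the reverse direction, where the hom-set strong hypothesis must be used.

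\medskip

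\noindent\textbf{The easy direction.} If $G$ is finite, then $G_1$ is finite by Definition~\ref{def:category}, and since $G_0 \subseteq G_1$ we get that $G_0$ is finite; moreover each monoid $G(a) = G(a,a)$ is a subset of $G_1$, hence finite. So the forward implication requires no use of the hom-set strong hypothesis at all.

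\medskip

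\noindent\textbf{The substantive direction.} Conversely, suppose $G_0$ is finite and each monoid $G(a)$ is finite. The goal is to show $G_1$ is finite. Since $G_0$ is finite, I can write $G_1 = \bigcup_{a,b \in G_0} G(a,b)$ as a finite union, so it suffices to show that each hom-set $G(a,b)$ is finite. Fix $a,b \in G_0$. If $G(a,b) = \emptyset$ there is nothing to prove, so assume $G(a,b) \neq \emptyset$. The key idea is to transfer the finiteness from the endomorphism monoid $G(a)$ (or $G(b)$) to the hom-set $G(a,b)$ using the hom-set strong structure. By Proposition~\ref{prop:equivalentcategory}(1) (or directly part (3)), since $G(a,b)$ is nonempty, $G(b,a)$ is also nonempty; fix a morphism $h \in G(b,a)$, that is $h : a \to b$. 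Then left-composition with $h$ gives a map $G(a,b) \to G(b)$, $g \mapsto hg$. Since $G(b)$ is finite, I aim to show this map is injective, which would force $G(a,b)$ to be finite.

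\medskip

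\noindent\textbf{The main obstacle and its resolution.} The subtle point is that injectivity of $g \mapsto hg$ is not automatic in a general category, because $h$ need not be a monomorphism. This is precisely where the hom-set strong hypothesis earns its keep. By Proposition~\ref{prop:equivalentcategory}(3), since $G(a,b)$ is nonempty we have $b \in G(b,a)G(a,b)$, so there exist morphisms realizing the identity $b$ as a composite $b = h_0 g_0$ with $h_0 : a \to b$ and $g_0 : b \to a$. The cleaner route is to use the identity $G(a,b)G(b,a) = G(a)$ from part~(2): picking any fixed $h \in G(b,a)$, I can also choose (again via hom-set strongness applied to the pair $(a,b)$) an element $k \in G(a,b)$ with $kh = a$, so that composition with $h$ on the left is a retraction. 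Then for $g, g' \in G(a,b)$ with $hg = hg'$, applying $k$ on the left yields $g = (kh)g = k(hg) = k(hg') = (kh)g' = g'$, giving injectivity. Thus $g \mapsto hg$ embeds $G(a,b)$ into the finite set $G(b)$, so $G(a,b)$ is finite. As $a,b$ range over the finite set $G_0$, the union $G_1$ is finite, completing the proof. The one delicate step to get exactly right is arranging the left inverse $k$ for $h$ from the hom-set strong axioms; I expect this matching of which element plays which role to be the only place requiring care.
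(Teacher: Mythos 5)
Your proof is correct and follows essentially the same route as the paper's: both use hom-set strongness to factor an identity morphism through $G(a,b)$ and $G(b,a)$, then use one factor to embed $G(a,b)$ into a finite endomorphism monoid with the other factor supplying the retraction that forces injectivity. One caveat on the step you yourself flag as delicate: the assertion that \emph{any} fixed $h \in G(b,a)$ admits $k \in G(a,b)$ with $kh = a$ does not follow from $G(a,b)G(b,a) = G(a)$ (and is false, e.g., in the monoid category $MX$ of Example~\ref{ex:notgroupoid} when $M$ is not a group); you must instead take $h$ and $k$ simultaneously from the factorization $a = kh$ provided by Proposition~\ref{prop:equivalentcategory}(3), after which your argument coincides with the paper's.
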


\begin{proof}
The ``only if'' statement is clear. 
Now we show the ``if'' statement.
Suppose $G_0$ is finite and let all monoids 
$G(a)$, for $a \in G_0$, be finite.
Take different $c,d \in G_0$ with $G(c,d) \neq \emptyset$.
By Proposition \ref{prop:equivalentcategory}(3)
there are $p : c \to d$ and $q : d \to c$
such that $p q = c$. Define maps
$\alpha : G(c,d) \to G(c)$
and $\beta : G(c) \to G(c,d)$ by
$\alpha(g) = qg$, for $g \in G(c,d)$,
and $\beta(h) = p h$, for $h \in G(c)$.
Then $\beta \alpha = {\rm id}_{G(c,d)}$, which,
in particular, implies that $\alpha$ is injective.
Since $G(c)$ is finite this implies that $G(c,d)$ is
finite. Thus, since $G_0$ is finite, we get that
$G_1 = \cup_{a,b \in G_0} G(a,b)$ is finite.
\end{proof}

\section{Hom-set-strongly category graded rings}\label{sec:chaincategory}

In this section, we study the class of object unital
category (and groupoid) graded rings. 
At the end of the section, we prove 
Theorem \ref{thm:generalizationnas}.

\begin{defi}\label{def:categorygraded}
For the rest of this section, we let $S$ denote a
ring which is \emph{$G$-graded}.
Recall from \cite{lundstrom2004,lundstrom2005} 
that this means that there
for every $g \in G_1$ is an additive subgroup 
$S_g$ of $S$ such that $S = \bigoplus_{g \in G_1} S_g$
and for all $g,h \in G_1$, the inclusion 
$S_g S_h \subseteq S_{gh}$ holds,
if $(g,h) \in G_2$, and $S_g S_h = \{ 0 \}$, otherwise.
Note that if $H$ is a subcategory of $G$ then 
$S_H := \oplus_{h \in H_1} S_h$ is a subring of $S$.
Following \cite{cala2021,cala2022}
(see also \cite{oinert2012})
we say that the $G$-grading on $S$ is 
{\it object unital} if 
for all $a \in G_0$ the ring $S_a$ is unital and 
for all $g \in G_1$ and all 
$s \in S_g$ the equalities 
$1_{S_{c(g)}} s = s 1_{S_{d(g)}} = s$ hold.
In that case, $S$ is 
a ring with enough idempotents with
$\{ 1_{S_a} \}_{a \in G_0}$
as a complete set of idempotents and the 
following equality holds for all $a,b \in G_0$:
\begin{equation}\label{eq:1A}
1_{S_a} S 1_{S_b} = S_{G(a,b)}.
\end{equation}
Following \cite{lundstrom2004} we say that
$S$ is \emph{strongly $G$-graded}
if for all $(g,h) \in G_2$, the equality 
$S_g S_h = S_{gh}$ holds.
\end{defi}

\begin{prop}\label{prop:equivalent3}
Suppose $G$ is a hom-set strong category and
let $S$ be an object unital $G$-graded ring.
Then the following properties are equivalent:
\begin{enumerate}

\item $\forall (a,b,c) \in G_0 \times G_0 \times G_0$ 
if two of the groups
$S_{G(a,b)}$, $S_{G(b,c)}$ and $S_{G(a,c)}$ are nonzero,
then the third is also nonzero and 
$S_{G(a,b)} S_{G(b,c)} = S_{G(a,c)}$;

\item $\forall (p,q) \in G_0 \times G_0$ 
if one of the groups $S_G(p,q)$
and $S_{G(q,p)}$ is nonzero, then the other one is also
nonzero and $S_{G(p,q)} S_{G(q,p)} = S_{G(p)}$;

\item  $\forall (p,q) \in I \times I$ 
if one of the groups $S_{G(p,q)}$
and $S_{G(q,p)}$ is nonzero, then the other one is also
nonzero and $1_{S_p} \in S_{G(p,q)} S_{G(q,p)}$;

\item The set $\{ 1_{S_a} \}_{a \in G_0}$
is a strong complete set of idempotents for $S$.

\end{enumerate}
\end{prop}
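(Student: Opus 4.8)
The plan is to reduce the entire proposition to Proposition \ref{prop:equivalentIII} together with Definition \ref{def:strong}, by setting up a dictionary between the graded pieces and the idempotent-cut subgroups. First I would invoke the object unital hypothesis recorded in Definition \ref{def:categorygraded}: it guarantees that $S$ is a ring with enough idempotents and that $\{ e_a := 1_{S_a} \}_{a \in G_0}$ is a complete set of idempotents for $S$. Taking $I = G_0$, the internal notation $S_{ab} = e_a S e_b$ of Definition \ref{def:enoughidempotents} then matches the graded notation through the identity \eqref{eq:1A}, namely $S_{ab} = 1_{S_a} S 1_{S_b} = S_{G(a,b)}$ for all $a,b \in G_0$; in particular $S_a = S_{G(a)}$ and $e_p = 1_{S_p}$.

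Once this dictionary is in place, the equivalences become almost formal. Under the substitutions $I \leftrightarrow G_0$, $S_{ab} \leftrightarrow S_{G(a,b)}$ and $e_p \leftrightarrow 1_{S_p}$, properties (1), (2) and (3) of the present proposition are word-for-word properties (1), (2) and (3) of Proposition \ref{prop:equivalentIII} applied to $S$ with this complete set of idempotents. Hence Proposition \ref{prop:equivalentIII} immediately yields (1)$\Leftrightarrow$(2)$\Leftrightarrow$(3). To fold in (4), I would quote Definition \ref{def:strong}: saying that $\{ 1_{S_a} \}_{a \in G_0}$ is a strong complete set of idempotents for $S$ means precisely that $S$ together with this complete set satisfies one of the three properties of Proposition \ref{prop:equivalentIII}, which by that proposition is the same as satisfying all three. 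Since those three properties coincide with (1), (2), (3) above, statement (4) is equivalent to each of them, and the full chain (1)$\Leftrightarrow$(2)$\Leftrightarrow$(3)$\Leftrightarrow$(4) follows.

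I do not anticipate a genuine obstacle, since all the real content is already carried by Proposition \ref{prop:equivalentIII}; the only point requiring care is to confirm that \eqref{eq:1A} makes the translation exact, including the matching of the ``nonzero'' hypotheses, i.e.\ that $S_{G(a,b)}$ is nonzero as a graded piece if and only if $e_a S e_b$ is nonzero, which is immediate from \eqref{eq:1A}. I would also note, for the reader's orientation, that the hom-set strong hypothesis on $G$ is not needed for this particular equivalence; it is the standing assumption of the section and enters only in the applications that follow.
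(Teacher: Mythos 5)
Your proof is correct and takes essentially the same route as the paper, whose own proof is a one-line reduction citing Proposition \ref{prop:equivalentIII} and equation \eqref{eq:1A}; your dictionary $1_{S_a} S 1_{S_b} = S_{G(a,b)}$ is exactly the intended argument, spelled out. Your side remark is also accurate: the paper additionally cites Proposition \ref{prop:equivalentcategory}, but the hom-set strong hypothesis plays no role in this particular equivalence.
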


\begin{proof}
This follows from Proposition \ref{prop:equivalentIII},
Proposition \ref{prop:equivalentcategory} and
equation (\ref{eq:1A}).
\end{proof}

\begin{defi}\label{def:homsetstronglygraded}
Suppose $G$ is a hom-set strong category and
let $S$ be an object unital $G$-graded ring.
If $S$ satisfies any of the equivalent 
properties in Proposition \ref{prop:equivalent3},
then we say that $S$ is \emph{hom-set-strongly} $G$-graded.
\end{defi}

\begin{thm}\label{thm:propertiesagain}
Suppose $S$ is an object unital $G$-graded ring.
\begin{enumerate}

\item If $S$ is left/right artinian (noetherian),
then $G_0$ is finite and, for every $a \in G_0$,
the ring $S_{G(a)}$ is left/right artinian (noetherian).

\item Suppose $G$ is a hom-set strong category and let
$S$ be hom-set-strongly $G$-graded.
Then $S$ is left/right artinian (noetherian) if and only if
$G_0$ is finite and
$S_{G(a)}$ is left/right artinian (noetherian)
for all $a \in G_0$.

\end{enumerate}
\end{thm}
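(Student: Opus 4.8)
The plan is to deduce Theorem \ref{thm:propertiesagain} directly from Theorem \ref{thm:main} by translating the hypotheses about the $G$-grading into the language of rings with enough idempotents. The bridge is the observation, recorded in Definition \ref{def:categorygraded}, that an object unital $G$-graded ring $S$ is a ring with enough idempotents whose complete set of idempotents is $\{ 1_{S_a} \}_{a \in G_0}$, together with the key equality \eqref{eq:1A}, namely $1_{S_a} S 1_{S_b} = S_{G(a,b)}$. Setting $e_a := 1_{S_a}$ we obtain $S_{ab} = e_a S e_b = S_{G(a,b)}$ and in particular $S_a = e_a S e_a = S_{G(a)}$ in the notation of Definition \ref{def:enoughidempotents}, so the index set $I$ is identified with $G_0$.

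With this dictionary in hand, part (1) is immediate. First I would invoke Theorem \ref{thm:main}(1) applied to $S$ with the complete set of idempotents $\{ e_a \}_{a \in G_0}$: if $S$ is left/right artinian (noetherian), then $G_0 = I$ is finite and each ring $e_a S e_a$ is left/right artinian (noetherian). Rewriting $e_a S e_a = S_{G(a)}$ via \eqref{eq:1A} gives exactly the conclusion of part (1). Note that part (1) requires no hypothesis on $G$ beyond being a small category, which matches the statement.

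For part (2), the additional hypotheses are that $G$ is hom-set strong and that $S$ is hom-set-strongly $G$-graded. By Proposition \ref{prop:equivalent3}, property (4) there states precisely that $\{ 1_{S_a} \}_{a \in G_0}$ is a \emph{strong} complete set of idempotents for $S$ in the sense of Definition \ref{def:strong}. This is the crucial input that lets us upgrade from the one-directional implication of Theorem \ref{thm:main}(1) to the biconditional of Theorem \ref{thm:main}(2). I would therefore apply Theorem \ref{thm:main}(2) to conclude that $S$ is left/right artinian (noetherian) if and only if $G_0$ is finite and each $e_a S e_a$ is left/right artinian (noetherian); translating $e_a S e_a = S_{G(a)}$ once more yields the stated equivalence.

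The proof is thus essentially a bookkeeping exercise, and I do not anticipate a serious obstacle. The only point requiring care is confirming that the hypothesis ``hom-set-strongly $G$-graded'' genuinely delivers strongness of the idempotent set; this is handled cleanly by the equivalence of properties (3) and (4) in Proposition \ref{prop:equivalent3}, which was in turn established by combining Propositions \ref{prop:equivalentIII} and \ref{prop:equivalentcategory} through \eqref{eq:1A}. I would write the argument as two short paragraphs, each citing Theorem \ref{thm:main} and the translation \eqref{eq:1A}, and flag at the start that the identification $S_{G(a,b)} = e_a S e_b$ is what makes everything go through.
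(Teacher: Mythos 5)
Your proposal is correct and follows exactly the paper's own route: the paper proves this theorem by citing Theorem \ref{thm:main} together with Proposition \ref{prop:equivalent3}, which is precisely the translation via equation (\ref{eq:1A}) that you spell out. Your write-up simply makes the bookkeeping explicit.
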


\begin{proof}
This follows from Theorem \ref{thm:main} and
Proposition \ref{prop:equivalent3}.
\end{proof}

\begin{prop}\label{prop:groupoidimplies}
Suppose $S$ is an object unital $G$-graded ring.
If $G$ is a groupoid,
then $S$ is hom-set-strongly $G$-graded.
\end{prop}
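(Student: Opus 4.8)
The plan is to verify directly that the complete set of idempotents $\{ 1_{S_a} \}_{a \in G_0}$ is \emph{strong}; by Definition \ref{def:homsetstronglygraded} together with Proposition \ref{prop:equivalent3} this is exactly the assertion that $S$ is hom-set-strongly $G$-graded. Since $G$ is a groupoid, Proposition \ref{prop:groupoidimplieshom} shows that $G$ is a hom-set strong category, so the running hypotheses of Proposition \ref{prop:equivalent3} are satisfied and it suffices to establish any one of its equivalent conditions. I would aim for condition (3), as it is the weakest to state and the most convenient to produce from the groupoid structure.

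First I would fix $(p,q) \in G_0 \times G_0$ and treat the case $S_{G(p,q)} \neq \{ 0 \}$. By \eqref{eq:1A} we have the homogeneous decomposition $S_{G(p,q)} = \oplus_{g \in G(p,q)} S_g$, so there is a morphism $g \in G(p,q)$, that is $g : q \to p$, with $S_g \neq \{ 0 \}$. As $G$ is a groupoid the inverse $g^{-1} : p \to q$ exists, so $g^{-1} \in G(q,p)$, hence $S_{g^{-1}} \subseteq S_{G(q,p)}$, and $g g^{-1} = p$ is the identity morphism at $p$.

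The heart of the argument is to locate $1_{S_p}$ inside $S_{G(p,q)} S_{G(q,p)}$. Here I would use the strong grading to compute $S_g S_{g^{-1}} = S_{g g^{-1}} = S_p$. Since $S_p$ is a nonzero unital ring this simultaneously forces $S_{g^{-1}} \neq \{ 0 \}$, whence $S_{G(q,p)} \neq \{ 0 \}$ as the statement demands, and gives
\[
1_{S_p} \in S_p = S_g S_{g^{-1}} \subseteq S_{G(p,q)} S_{G(q,p)} ,
\]
which is precisely condition (3). The remaining case $S_{G(q,p)} \neq \{ 0 \}$ is handled identically: pick $h \in G(q,p)$ with $S_h \neq \{ 0 \}$ and apply the same computation to $h^{-1} \in G(p,q)$, using $h^{-1} h = p$, to obtain $1_{S_p} \in S_{h^{-1}} S_h \subseteq S_{G(p,q)} S_{G(q,p)}$.

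The step I expect to be decisive is the equality $S_g S_{g^{-1}} = S_p$. The purely categorical relation $p = g g^{-1}$ coming from Proposition \ref{prop:equivalentcategory}(3) only places us inside $S_{g g^{-1}} = S_p$; to conclude that $1_{S_p}$ itself lies in the product one needs the strong-grading identity $S_g S_h = S_{gh}$. Object-unitality alone does not suffice for this --- it only guarantees that $1_{S_p}$ acts as a two-sided identity on each homogeneous component --- so the argument genuinely rests on the strong grading available in the setting of Theorem \ref{thm:generalizationnas}, and it is this identity that must be secured for the proof to go through.
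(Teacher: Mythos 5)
Your proof is correct and follows essentially the same route as the paper's: reduce to condition (3) of Proposition \ref{prop:equivalent3} after invoking Proposition \ref{prop:groupoidimplieshom}, pick $g \in G(p,q)$ with $S_g \neq \{ 0 \}$, and use $S_g S_{g^{-1}} = S_{gg^{-1}} = S_p \ni 1_{S_p}$. You are also right to flag that the argument genuinely requires $S$ to be \emph{strongly} $G$-graded; the paper's own proof uses this identity as well, even though that hypothesis is omitted from the statement of Proposition \ref{prop:groupoidimplies} (it is present in the setting where the proposition is applied, namely Theorem \ref{thm:generalizationnas}).
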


\begin{proof}
Suppose $G$ is a groupoid.
By Proposition \ref{prop:groupoidimplieshom}
$G$ is hom-set strong.
Take $p,q \in G_0$. We consider two cases.
Case 1: $S_{G(p,q)} \neq \{ 0\}$.
Then there is $g : q \to p$ with $S_g \neq \{ 0 \}$.
Since $S$ is strongly $G$-graded it 
follows that
$1_{S_p} \in S_p = S_g S_{g^{-1}} \subseteq 
S_{G(p,q)} S_{G(q,p)}$. Thus $S_{G(q,p)} \neq \{ 0 \}$.
%\linebreak
Case 2: $S_{G(q,p)} \neq \{ 0 \}$. By a calculation similar
to the one in Case 1 it follows that $S_{G(p,q)} \neq \{ 0 \}$
and that $1_{S_p} \in S_{G(p,q)} S_{G(q,p)}$.
By Proposition~\ref{prop:equivalent3}(3)
it follows that $S$ is hom-set-strongly $G$-graded.
\end{proof}

\begin{defi}\label{def:torsionfree}
Recall that a group is called \emph{torsion-free} 
if the only element in the group of finite order is the 
identity.
More generally, we say that a groupoid $G$ is torsion-free
if for every $a \in G_0$ the group $G(a)$ is torsion-free.
\end{defi}

\begin{thm}\label{thm:nastasescu}
Suppose $H$ is a torsion-free group 
with identity element 1 and let
$T$ be a unital and $H$-graded ring.
Then $T$ is left/right artinian if and only if
$T_1$ is left/right artinian and $T$ is finitely 
generated as a left/right $T_1$-module.  
\end{thm}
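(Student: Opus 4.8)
The plan is to prove the two implications separately; I treat the ``left'' case throughout, the ``right'' case being entirely symmetric. Since $T$ is unital and $H$-graded, the identity $1_T$ is homogeneous of degree $1$, so $T_1$ is a unital subring of $T$ and $T = T_1 \oplus \bigl( \bigoplus_{h \neq 1} T_h \bigr)$, where the second summand is a $T_1$-subbimodule; in particular $T_1$ is a direct summand of $T$ both as a left and as a right $T_1$-module. For the ``if'' direction, assume $T_1$ is left artinian and $T = \sum_{i=1}^{n} T_1 x_i$ for some $x_1, \dots, x_n \in T$. Then $T$ is a homomorphic image of the free left $T_1$-module $T_1^{\,n}$, which is artinian by Proposition~\ref{prop:module}(4); hence the quotient $T$ is artinian as a left $T_1$-module. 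As $T_1 \subseteq T$, Proposition~\ref{prop:module}(1) upgrades this to $T$ being artinian as a left $T$-module, that is, $T$ is left artinian. This direction uses neither the grading (beyond finite generation) nor torsion-freeness.

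For the ``only if'' direction, assume $T$ is left artinian. That $T_1$ is left artinian is the easy half: $T_1$ is a direct summand of $T$ as a right $T_1$-module, so Proposition~\ref{prop:module}(2) applies and yields that $T_1$ is left artinian. The substantive claim is that $T$ is finitely generated as a left $T_1$-module, and here I would first reduce to a finiteness statement about the support $\Sigma := \{\, h \in H : T_h \neq \{0\} \,\}$: a finite set of homogeneous generators of $T$ over $T_1$ involves only finitely many degrees, so finite generation forces $\Sigma$ to be finite, while conversely, if $\Sigma$ is finite and each component $T_h$ is finitely generated over $T_1$, then so is $T$. Thus it suffices to prove that $\Sigma$ is finite and that every $T_h$ is a finitely generated left $T_1$-module.

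To establish this finiteness I would filter $T$ by the powers of its Jacobson radical $J = J(T)$. Since $T$ is left artinian, $J$ is nilpotent, say $J^m = \{0\}$, the quotient $\overline{T} := T/J$ is semisimple artinian, and each layer $J^k / J^{k+1}$ is a finitely generated $\overline{T}$-module. The key structural input is that, for $H$ torsion-free, $J$ is a \emph{graded} ideal, so the filtration respects the $H$-grading and the problem descends to $\overline{T}$. For the semisimple quotient one writes $\overline{T}$ as a finite product of simple artinian rings and shows that an $H$-graded simple artinian ring has finite support with each component finite over its degree-$1$ part; reassembling the finitely many graded layers then gives that $\Sigma$ is finite and each $T_h$ is finitely generated over $T_1$, which completes the argument.

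The main obstacle is precisely this last block: proving that the Jacobson radical of a left artinian $H$-graded ring is graded when $H$ is torsion-free, and then controlling the semisimple quotient. This is the genuine content of the Nastasescu--van Oystaeyen theorem, and it is exactly where torsion-freeness is indispensable, since the hypothesis cannot be dropped: the analogous statement fails for groups with torsion. Everything surrounding this step is the formal module theory already packaged in Proposition~\ref{prop:module}.
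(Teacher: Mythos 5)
The paper does not actually prove this statement: it is imported as a black box, the ``proof'' being the citation to \cite[Cor.~9.6.2]{nastasescu2004} (see also \cite[Thm.~1.2]{lannstrom2020}). Your proposal, by contrast, attempts a real argument, and its routine parts are correct: the ``if'' direction via a surjection $T_1^{\,n} \to T$ of left $T_1$-modules together with Proposition~\ref{prop:module}(1) and (4), the artinianity of $T_1$ via the direct-summand argument and Proposition~\ref{prop:module}(2), and the reduction of finite generation over $T_1$ to finiteness of the support $\Sigma$ plus finite generation of each component $T_h$ are all fine (and, as you note, none of this uses torsion-freeness).

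The gap is exactly where you locate it, but it is a genuine gap rather than a deferrable detail: the claim that $T$ left artinian forces $T$ to be finitely generated over $T_1$ is never proved. Moreover, the route you sketch rests on the assertion that $J(T)$ is a graded ideal whenever $H$ is torsion-free. That is not a formal fact: it is a theorem for $\mathbb{Z}$-gradings (Bergman) and more generally for unique-product groups, but a torsion-free group need not have the unique-product property, so in the stated generality this step itself requires a nontrivial proof and is not obviously available. The subsequent claim --- that an $H$-graded simple artinian ring over torsion-free $H$ has finite support with each homogeneous component finitely generated over the degree-one part --- is likewise asserted rather than proven and is essentially the content of the theorem. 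So the proposal reduces the statement to two unproven claims, at least one of which is as deep as the original. Since the paper uses this theorem purely as an external input, the honest alternatives are to cite \cite{nastasescu2004} as the paper does, or to reproduce the actual argument of Cor.~9.6.2, which proceeds through gr-simple modules and graded division rings rather than through the ungraded Jacobson radical.
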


\begin{proof}
This is \cite[Cor. 9.6.2]{nastasescu2004} (see also 
\cite[Thm. 1.2]{lannstrom2020}).
\end{proof}

\begin{defi}\label{def:polycyclicbyfinite}
Recall that a group is called 
\emph{polycyclic-by-finite} if it has a finite length 
subnormal series with each factor a finite group 
or an infinite cyclic group.
More generally, we say that a groupoid $G$ is 
polycyclic-by-finite
if for every $a \in G_0$ the group $G(a)$ is 
polycyclic-by-finite.
\end{defi}

\begin{thm}\label{thm:bell}
Suppose $H$ is a polycyclic-by-finite group
with identity 
\linebreak
element 1 and let
$T$ be a unital and strongly $H$-graded ring.
Then $T$ is left/right noetherian
if and only if $T_1$ is left/right noetherian.
\end{thm}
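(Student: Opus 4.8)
The statement is Bell's theorem, so the plan is to prove it directly rather than merely quote \cite{bell1987}. The \emph{only if} direction is immediate: if $T$ is strongly $H$-graded then $1_T \in T_1$, so $T_1$ is a unital ring, and the decomposition $T = T_1 \oplus \bigl( \bigoplus_{g \neq 1} T_g \bigr)$ exhibits $T_1$ as a direct summand of $T$ both as a left and as a right $T_1$-module. Hence Proposition \ref{prop:module}(2) shows that left/right noetherianity of $T$ descends to $T_1$. For the \emph{if} direction I would induct on the length $n$ of a subnormal series $1 = H_0 \triangleleft H_1 \triangleleft \cdots \triangleleft H_n = H$ in which every factor $H_i / H_{i-1}$ is finite or infinite cyclic. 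The base case $n = 0$ is trivial, since then $T = T_1$.

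For the inductive step I would put $N := H_{n-1}$, a normal subgroup with $H/N$ finite or infinite cyclic and with $N$ polycyclic-by-finite of series length $n-1$. The subring $T_N = \bigoplus_{g \in N} T_g$ is strongly $N$-graded with identity component $T_1$, so by the induction hypothesis $T_N$ is left/right noetherian. Grouping the $H$-grading by cosets of $N$, the subgroups $T_{(\overline{g})} := \bigoplus_{h \in gN} T_h$ make $T$ into a strongly $(H/N)$-graded ring with identity component $R := T_N$; thus it suffices to treat a strongly $Q$-graded ring with $Q$ finite or infinite cyclic and $R$ noetherian. If $Q$ is finite I would show $T$ is module-finite over $R$: for each $\overline{g} \in Q$ strong grading gives $1 \in R = T_{(\overline{g^{-1}})} T_{(\overline{g})}$, say $1 = \sum_j c_j d_j$ with $c_j \in T_{(\overline{g^{-1}})}$ and $d_j \in T_{(\overline{g})}$, so that every $t \in T_{(\overline{g})}$ equals $\sum_j (t c_j) d_j$ with $t c_j \in R$; hence the finitely many $d_j$ generate $T_{(\overline{g})}$ over $R$ (symmetrically on the left). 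Since $Q$ is finite, $T$ is a finite direct sum of finitely generated $R$-modules, hence noetherian as an $R$-module, and as every one-sided ideal of $T$ is an $R$-submodule, Proposition \ref{prop:module}(1) gives that $T$ is left/right noetherian.

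The infinite cyclic case $Q \cong \mathbb{Z}$ is the crux. Writing $T = \bigoplus_{m \in \mathbb{Z}} R_m$ with $R_0 = R$ and $R_m R_n = R_{m+n}$, I would first establish the strongly $\mathbb{N}$-graded Hilbert basis theorem for $T_+ := \bigoplus_{m \geq 0} R_m$: for a left ideal $J$ of $T_+$, the degree-$m$ leading coefficients of elements of $J$ supported in degrees $\leq m$ form a left $R_0$-submodule of $R_m$, and transporting these to $R_0$ through the invertible-bimodule identification coming from $R_{-m} R_m = R_0$ (computed inside $T$) produces an ascending chain of left ideals of $R_0$ that stabilizes because $R_0$ is noetherian; a standard bookkeeping argument then extracts finitely many generators for $J$. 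Granting that $T_+$ is noetherian, I would pass to $T$: for a left ideal $L$ and $x \in L$ of lowest degree $-k$, the relation $1 = \sum_i c_i d_i$ with $c_i \in R_{-k}$ and $d_i \in R_k$ gives $x = \sum_i c_i (d_i x)$ where each $d_i x$ lies in $L \cap T_+$, so $L = T(L \cap T_+)$ is finitely generated over $T$. This closes the induction and the proof.

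I expect the genuine obstacle to be the strongly $\mathbb{N}$-graded Hilbert basis step. Unlike the classical polynomial situation, the degree-one part $R_1$ is not generated by a single central variable, so the leading-coefficient analysis cannot be run by dividing out one element; it must instead be carried through the invertible $(R_0,R_0)$-bimodules $R_m$, and it is precisely the strong grading (which yields $R_{-m} R_m = R_m R_{-m} = R_0$) that makes the leading-coefficient modules correspond to honest left ideals of the noetherian ring $R_0$ and hence stabilize. Getting this correspondence and the resulting finiteness of the generating set exactly right is the delicate part; everything else in the argument is formal manipulation with the strong grading.
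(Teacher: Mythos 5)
The paper does not actually prove this theorem: its ``proof'' is a citation of \cite[Prop. 2.5]{bell1987} and \cite[Thm. 1.1]{lannstrom2020}, so any genuine argument you give is by definition a different route. Your outline is essentially the classical one that those sources use: the easy descent to $T_1$ via the direct-summand argument, induction along a subnormal series using the fact that grouping a strong $H$-grading by cosets of a normal subgroup $N$ yields a strong $H/N$-grading with identity component $T_N$, the module-finiteness argument when the quotient is finite, and a strongly $\mathbb{Z}$-graded Hilbert basis theorem when the quotient is infinite cyclic. All of these reductions check out: the coset grading is indeed strong because $T_k = T_g T_{g^{-1}k}$ for $k \in ghN$, the relation $1 = \sum_j c_j d_j$ does give finite generation of each component over $T_N$ in the finite case, and the reduction $L = T(L \cap T_+)$ correctly passes from $T_+$ to $T$. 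The one place where your proposal is a sketch rather than a proof is exactly the place you flag: the leading-coefficient argument for $T_+$. It does work as you describe --- the modules $L_m$ of degree-$m$ leading coefficients satisfy $R_1 L_m \subseteq L_{m+1}$, so the left ideals $I_m := R_{-m}L_m$ of $R_0$ form an ascending chain (since $I_m = R_{-(m+1)}R_1 L_m \subseteq I_{m+1}$), stabilization gives $L_{m+1} = R_1 L_m$ for large $m$, and each $L_m$ is a noetherian $R_0$-module because $R_m$ is finitely generated projective over $R_0$ by the strong grading --- but these details, together with the final bookkeeping extracting generators of $J$ from generators of $L_0,\dots,L_M$, would need to be written out for the argument to be complete. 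What your approach buys is a self-contained proof in place of an external citation; what the paper's approach buys is brevity and the slightly greater generality of \cite{lannstrom2020}.
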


\begin{proof}
See \cite[Prop. 2.5]{bell1987} and
in a slightly more general case 
\cite[Thm. 1.1]{lannstrom2020}
\end{proof}

\subsubsection*{Proof of Theorem \ref{thm:generalizationnas}}
This follows from Theorem \ref{thm:propertiesagain},
Proposition \ref{prop:groupoidimplies},
\linebreak
Theorem \ref{thm:nastasescu} and
Theorem \ref{thm:bell}. \qed

\section{Skew category algebras}\label{sec:chainskew}

In this section, we apply the previous results 
to analyse chain conditions for
skew category algebras which are defined by 
skew category systems of unital rings.
At the end of the section, 
we prove Theorems \ref{thm:generalizationpark}
and \ref{thm:generalizationzelmanov}.

\begin{defi}\label{def:skewcategory}
For the rest of the article, let
$R = \{ R_a \}_{a \in G_0}$ be a collection of 
unital rings and let
$\alpha = \{ \alpha_g : R_{d(g)} 
\to R_{c(g)} \}_{g \in G_1}$ be a collection
of ring isomorphisms.
Following \cite{lundstrom2012} we say that $\alpha$
is a \emph{skew category system} if $\alpha$
is a functor from $G$ to the category of unital rings,
that is, if $\alpha( gh ) = \alpha(g) \alpha(h)$
for all $(g,h) \in G_2$.
Again following \cite{lundstrom2012}, 
we say that the associated 
\emph{skew category algebra}
of $G$ over $R$, denoted by $R *_\alpha G$,
is the set 
of formal finite sums of elements of the form $r g$
for $r \in R_{c(g)}$ and $g \in G_1$.
The addition in $R *_\alpha G$ is defined by the relations
$rg + r' g = (r + r') g$ for $r,r' \in R_{c(g)}$ and $g \in G_1$.
The multiplication in $R *_\alpha G$ is defined by 
the additive extensions
of the relations $r g \cdot r' h = r \alpha_g(r') gh$, for 
$r \in R_{c(g)},r' \in R_{c(h)}$ and $(g,h) \in G_2$, and 
$rg \cdot r'h = 0$, when $g,h \in G_1$ but 
$(g,h) \notin G_2$. %Put $S = R *_\alpha G$.
The ring $R *_\alpha G$ is $G$-graded 
if we put $(R *_\alpha G)_g = R_{c(g)} g$ for 
$g \in G_1$. In fact, with this grading, $R *_\alpha G$
is strongly $G$-graded. Also $R *_\alpha G$ is object unital
since for each $a \in G_0$, the ring 
$(R *_\alpha G)_a = R_a a$ is unital 
with $1_{R_a a} = 1_{R_a} a$. 
If $G$ is a groupoid (group, monoid),
then $R *_\alpha G$ is called a skew groupoid (group, monoid)
algebra of $G$ over $R$.
If all the rings in $R$ coincide with a ring $T$
and the ring isomorphisms in $\alpha$ are identity 
maps, then $R *_\alpha G$ is 
called a \emph{category algebra} of $G$
over $T$ and is denoted by $T[G]$.
In that case, if $G$ is groupoid (group, monoid),
then $T[G]$ is called a groupoid (group, monoid)
algebra of $G$ over $T$.
If $a \in G_0$, then we put
$\alpha(a) = \{ \alpha_g : R_{d(g)} 
\to R_{c(g)} \}_{g \in G(a)}$.
\end{defi}

\begin{prop}\label{prop:impliess}
If $R *_\alpha G$ 
is left/right artinian (noetherian), then 
$G_0$ is finite and, for every $a \in G_0$, 
the skew monoid ring
$R_a *_{\alpha(a)} G(a)$ 
is left/right artinian (noetherian).
\end{prop}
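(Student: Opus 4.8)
```latex
The plan is to deduce this from Theorem~\ref{thm:propertiesagain}(1)
applied to the ring $R *_\alpha G$, which by
Definition~\ref{def:skewcategory} is an object unital $G$-graded
ring. First I would invoke Theorem~\ref{thm:propertiesagain}(1):
since $R *_\alpha G$ is left/right artinian (noetherian), it follows
immediately that $G_0$ is finite and that, for every $a \in G_0$,
the ring $(R *_\alpha G)_{G(a)}$ is left/right artinian (noetherian).
This gives the finiteness of $G_0$ for free, so the only real
content left is to identify the ring $(R *_\alpha G)_{G(a)}$ with the
skew monoid ring $R_a *_{\alpha(a)} G(a)$.

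The key step is therefore to establish the ring isomorphism
$(R *_\alpha G)_{G(a)} \cong R_a *_{\alpha(a)} G(a)$. By definition
$(R *_\alpha G)_{G(a)} = \oplus_{g \in G(a)} (R*_\alpha G)_g =
\oplus_{g \in G(a)} R_{c(g)} g$. Since every $g \in G(a)$ satisfies
$d(g) = c(g) = a$, we have $R_{c(g)} = R_a$ for all such $g$, so as
an additive group this is exactly the set of finite formal sums
$\sum r_g g$ with $r_g \in R_a$ and $g \in G(a)$, which is precisely
the underlying additive group of the skew monoid ring
$R_a *_{\alpha(a)} G(a)$. The multiplication inherited from
$R *_\alpha G$ sends $rg \cdot r'h$ to $r\,\alpha_g(r')\,gh$ for
$(g,h) \in G_2$; but for $g,h \in G(a)$ we always have
$d(g) = a = c(h)$, so $(g,h) \in G_2$ and $gh \in G(a)$, meaning
$G(a)$ is a submonoid of $G$ and the product never vanishes. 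The
restriction $\alpha(a) = \{ \alpha_g : R_a \to R_a \}_{g \in G(a)}$
is a skew monoid system by Definition~\ref{def:skewcategory}, and
the multiplication rule matches verbatim. Hence the two rings are
equal (or canonically isomorphic via $rg \mapsto rg$).

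Combining these two observations completes the argument: the ring
$(R *_\alpha G)_{G(a)}$ supplied by Theorem~\ref{thm:propertiesagain}(1)
is the skew monoid ring $R_a *_{\alpha(a)} G(a)$, so the latter is
left/right artinian (noetherian) for every $a \in G_0$, and $G_0$ is
finite, as claimed. I expect the main obstacle to be purely
bookkeeping: carefully checking that the grading on $R *_\alpha G$
restricts correctly to $G(a)$ and that the subring $S_{G(a)}$ of
Definition~\ref{def:categorygraded} really is the skew monoid ring
rather than merely containing it. Since $G(a)$ is closed under
composition, no off-diagonal homogeneous components intrude and the
identification is clean, so no genuine difficulty arises beyond
matching conventions.
```
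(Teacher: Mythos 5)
Your proposal is correct and follows exactly the paper's route: the paper's own proof is the one-line deduction from Theorem~\ref{thm:propertiesagain}(1), leaving implicit the identification of $(R *_\alpha G)_{G(a)}$ with $R_a *_{\alpha(a)} G(a)$ that you spell out. Your extra bookkeeping is accurate and simply makes explicit what the paper takes for granted.
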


\begin{proof}
This follows from Theorem \ref{thm:propertiesagain}(1).
\end{proof}

\begin{prop}\label{prop:objectagain}
The set $\{ 1_{R_a} a \}_{a \in G_0}$ is a strong
complete set of 
\linebreak
idempotents for $R *_\alpha G$
if and only if the category $G$ is hom-set strong.
In that case, the ring $R *_\alpha G$ is 
hom-set-strongly $G$-graded. 
\end{prop}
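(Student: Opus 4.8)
The plan is to exploit the special feature of a skew category algebra that \emph{every} homogeneous component is nonzero, which lets one pass freely between vanishing of graded groups and emptiness of hom-sets. Write $S := R *_\alpha G$, which by Definition \ref{def:skewcategory} is strongly $G$-graded and object unital with complete set of idempotents $\{1_{R_a}a\}_{a\in G_0}$. Since each $R_a$ is a unital, hence nonzero, ring, the component $S_g = R_{c(g)}g$ is nonzero for every $g \in G_1$. Using (\ref{eq:1A}) to identify $e_a S e_b = 1_{R_a}a\cdot S\cdot 1_{R_b}b$ with $S_{G(a,b)} = \bigoplus_{g\in G(a,b)} R_{c(g)}g$, this yields for all $a,b \in G_0$ the dictionary
\[
S_{G(a,b)} \neq \{0\} \quad\Longleftrightarrow\quad G(a,b)\neq\emptyset .
\]
This matches the nonvanishing clauses of Proposition \ref{prop:equivalentIII}(3), applied to $\{1_{R_a}a\}_{a\in G_0}$, with the nonemptiness clauses of Proposition \ref{prop:equivalentcategory}(3), so after this translation the whole equivalence reduces to matching the condition $1_{R_p}p \in S_{G(p,q)}S_{G(q,p)}$ with $p \in G(p,q)G(q,p)$.

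For the implication that $G$ hom-set strong forces $\{1_{R_a}a\}_{a\in G_0}$ to be strong, I would argue as in the proof of Proposition \ref{prop:groupoidimplies}. Let $(p,q)$ be such that one of $S_{G(p,q)}, S_{G(q,p)}$ is nonzero; by the dictionary one of $G(p,q), G(q,p)$ is nonempty, so Proposition \ref{prop:equivalentcategory}(3) supplies $g \in G(p,q)$ and $h \in G(q,p)$ with $gh = p$ (the identity at $p$), and in particular both hom-sets are nonempty. Since $S$ is strongly $G$-graded, $S_g S_h = S_{gh} = R_p\,p$, whence
\[
1_{R_p}p \in R_p\,p = S_g S_h \subseteq S_{G(p,q)}S_{G(q,p)} .
\]
By the dictionary both groups are nonzero, so Proposition \ref{prop:equivalentIII}(3) holds and $\{1_{R_a}a\}_{a\in G_0}$ is strong.

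The reverse implication is the crux, as it requires recovering the combinatorial identity $p \in G(p,q)G(q,p)$ from the algebraic membership $1_{R_p}p \in S_{G(p,q)}S_{G(q,p)}$. Assume $\{1_{R_a}a\}_{a\in G_0}$ is strong and let $(p,q)$ be such that one of $G(p,q), G(q,p)$ is nonempty; by the dictionary the corresponding group is nonzero, so Proposition \ref{prop:equivalentIII}(3) applied to $(p,q)$ gives that both $S_{G(p,q)}$ and $S_{G(q,p)}$ are nonzero and that $1_{R_p}p \in S_{G(p,q)}S_{G(q,p)}$. I would then expand this membership as a finite sum of products $(rg)(sh) = r\,\alpha_g(s)\,gh$ with $g \in G(p,q)$, $h \in G(q,p)$, $r \in R_{c(g)}$ and $s \in R_{c(h)}$, noting that each $gh$ lies in $G(p)$. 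Comparing homogeneous components for the $G_1$-grading, the degree-$p$ part (at the identity morphism $p$) of the left-hand side equals $1_{R_p}\neq 0$, whereas on the right-hand side only terms with $gh = p$ can contribute in degree $p$. Hence at least one pair $(g,h)$ in the sum satisfies $gh = p$, which is exactly $p \in G(p,q)G(q,p)$; by the dictionary $G(p,q)$ and $G(q,p)$ are both nonempty, so Proposition \ref{prop:equivalentcategory}(3) holds and $G$ is hom-set strong.

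Finally, for the concluding assertion, suppose $G$ is hom-set strong. By the equivalence just established, $\{1_{R_a}a\}_{a\in G_0}$ is a strong complete set of idempotents for $S = R *_\alpha G$; since $R *_\alpha G$ is an object unital $G$-graded ring, this is precisely condition (4) of Proposition \ref{prop:equivalent3}. Therefore $R *_\alpha G$ satisfies all the equivalent properties of that proposition and so, by Definition \ref{def:homsetstronglygraded}, is hom-set-strongly $G$-graded, as claimed. The main obstacle throughout is the degree-comparison step in the reverse implication, where one must extract a genuine composition $gh = p$ from an identity in the algebra, using only the $G_1$-grading and the fact that $1_{R_p}\neq 0$.
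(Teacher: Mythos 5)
Your proof is correct and follows essentially the same route as the paper: translate via equation (\ref{eq:1A}) and the observation that $S_{G(a,b)}\neq\{0\}$ iff $G(a,b)\neq\emptyset$, then match condition (3) of Proposition \ref{prop:equivalentIII} against condition (3) of Proposition \ref{prop:equivalentcategory}. Your explicit degree-comparison argument extracting a pair with $gh=p$ from $1_{R_p}p\in S_{G(p,q)}S_{G(q,p)}$ is exactly the justification the paper compresses into ``Thus, in particular.''
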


\begin{proof}
Put $S = R *_\alpha G$.

Suppose $\{ 1_{R_a} a \}_{a \in G_0}$ is a strong
complete set of idempotents for $S$ and
take $(x,y) \in G_0 \times G_0$ such that 
one of the sets $G(x,y)$ and $G(y,x)$ is nonempty.
\begin{itemize}

\item Case 1: $G(x,y) \neq \emptyset$.
Then $1_{R_x} x S 1_{R_y} y = S_{G(x,y)} \neq \{ 0 \}$.
By Proposition \ref{prop:equivalentIII}(3) it follows
that $S_{G(y,x)} = 1_{R_y} y S 1_{R_x} x \neq \{ 0 \}$
and $1_{R_x} x \in S_{G(x,y)} S_{G(y,x)}$. 
Thus, in particular, $x \in G(x,y)G(y,x)$.

\item Case 2: $G(y,x) \neq \emptyset$. By an argument
similar to the one used in Case 1 it follows that
$G(x,y) \neq \emptyset$ and $y \in G(y,x)G(x,y)$.

\end{itemize}
By Proposition \ref{prop:equivalentcategory}(3)
it follows that $G$ is hom-set strong.

Suppose that $G$ is hom-set strong and
take $(p,q) \in G_0$ such that one of
the additive groups $1_{R_p}p S 1_{R_q} q = S_{G(p,q)}$ 
and $1_{R_q}q S 1_{R_p} p = S_{G(q,p)}$ is nonzero.
\begin{itemize}

\item Case 1: $S_{G(p,q)} \neq \{ 0 \}$.
Then $G(p,q) \neq \emptyset$. Proposition 
\ref{prop:equivalentcategory}(3) implies that 
$G(q,p) \neq \emptyset$ and $p \in G(p,q)G(q,p)$.
Thus $1_{R_p} p \in S_{G(p,q)} S_{G(q,p)}$.

\item Case 2: $S_{G(p,q)} \neq \{ 0 \}$. 
By an argument
similar to the one used in Case 1 it follows that
$G(p,q) \neq \emptyset$ and $q \in G(q,p)G(p,q)$
and $1_{R_q} q \in S_{G(q,p)} S_{G(p,q)}$.

\end{itemize}
By proposition \ref{prop:equivalentIII}(3) 
it follows that 
$\{ 1_{R_a} a \}_{a \in G_0}$ is a strong
complete set of idempotents for $S$.
The last statement follows from Proposition 
\ref{prop:equivalent3}.
\end{proof}

\begin{prop}\label{prop:localskew}
Suppose $G$ is hom-set strong.
Then $R *_\alpha G$ is left/right artinian (noetherian) 
if and only if
$G_0$ is finite and, for every $a \in G_0$, 
the skew monoid ring
$R_a *_{\alpha(a)} G(a)$ 
is left/right artinian (noetherian).
\end{prop}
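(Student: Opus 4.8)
The plan is to reduce this statement entirely to the category-graded framework already developed, exploiting the fact that a skew category algebra over a hom-set strong category is hom-set-strongly graded. Put $S := R *_\alpha G$. The first step is to invoke Proposition~\ref{prop:objectagain}: since $G$ is assumed to be hom-set strong, that result tells us both that $\{ 1_{R_a} a \}_{a \in G_0}$ is a strong complete set of idempotents for $S$ and, in its last sentence, that $S$ is hom-set-strongly $G$-graded. This is precisely the hypothesis required to apply the second part of Theorem~\ref{thm:propertiesagain}.

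Having established that $S$ is hom-set-strongly $G$-graded, I would then apply Theorem~\ref{thm:propertiesagain}(2) directly. It yields the equivalence that $S$ is left/right artinian (noetherian) if and only if $G_0$ is finite and, for every $a \in G_0$, the ring $S_{G(a)}$ is left/right artinian (noetherian). This already has the exact shape of the desired statement; all that remains is to recognize the homogeneous subring $S_{G(a)}$ as the skew monoid ring $R_a *_{\alpha(a)} G(a)$.

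For this final identification, recall from Definition~\ref{def:categorygraded} that $S_{G(a)} = \oplus_{g \in G(a)} S_g$ and that $(R *_\alpha G)_g = R_{c(g)} g$. For $g \in G(a)$ we have $c(g) = a$, so $S_g = R_a g$ and the underlying additive group of $S_{G(a)}$ is $\oplus_{g \in G(a)} R_a g$, which is exactly that of $R_a *_{\alpha(a)} G(a)$. For the multiplication, any $g, h \in G(a)$ satisfy $(g,h) \in G_2$ with $gh \in G(a)$ and $\alpha_g$ is an endomorphism of $R_a$, so the defining relation $r g \cdot r' h = r \alpha_g(r') gh$ restricts to precisely the multiplication of the skew monoid ring determined by the monoid $G(a)$ and the restricted system $\alpha(a)$. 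Hence $S_{G(a)} = R_a *_{\alpha(a)} G(a)$ as rings, and substituting this identification into the equivalence from Theorem~\ref{thm:propertiesagain}(2) delivers the claim.

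I do not anticipate a genuine obstacle here: the proof is a routine assembly of Proposition~\ref{prop:objectagain} and Theorem~\ref{thm:propertiesagain}(2). The only point that deserves explicit verification is the last one, namely that the grading on $R *_\alpha G$ concentrates the endomorphisms $G(a)$ in homogeneous components over $R_a$ so that $S_{G(a)}$ coincides with the skew monoid ring; but this is immediate from the definitions, so I expect it to be the mildest of the three steps rather than a difficulty.
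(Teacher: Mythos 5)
Your proposal is correct and follows exactly the paper's route: the paper's proof is precisely the one-line citation of Proposition~\ref{prop:objectagain} followed by Theorem~\ref{thm:propertiesagain}(2). The only difference is that you spell out the identification $S_{G(a)} = R_a *_{\alpha(a)} G(a)$, which the paper leaves implicit; that verification is correct and harmless to include.
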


\begin{proof}
This follows from Theorem \ref{thm:propertiesagain}(2) 
and Proposition~\ref{prop:objectagain}.
\end{proof}

For the rest of the article, $T$ denotes a fixed
unital ring.

\begin{thm}\label{thm:park}
Suppose $H$ is a group and let  
$\alpha : H \to {\rm Aut}(T)$ be a group 
homomorphism. Then the associated skew group ring
$T *_\alpha H$ is left/right artinian if and 
only if $H$ is finite and $T$ is left/right artinian.
\end{thm}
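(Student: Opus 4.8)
The plan is to treat the left/right artinian and noetherian conditions simultaneously and to exploit that $S := T *_\alpha H$ is free as a one-sided $T$-module on the basis $H$. For the ``if'' direction, assume $H$ is finite and $T$ is left/right artinian. As a left $T$-module we have $S = \bigoplus_{h \in H} Th$, and each summand $Th$ is isomorphic to $T$ (it is cyclic, generated by $h$, with zero annihilator since each $\alpha_h$ is bijective). Since $H$ is finite, Proposition \ref{prop:module}(4) shows that $S$ is left artinian as a left $T$-module, and then Proposition \ref{prop:module}(1), applied to the inclusion $T \subseteq S$, shows that $S$ is left artinian as a module over itself, i.e. $S$ is a left artinian ring. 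The right-handed and the noetherian versions are proved identically.

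For the ``only if'' direction, first note that $T = T \cdot 1$ is a unital subring of $S$ and that the decomposition $S = \bigoplus_{h \in H} Th$ is simultaneously a decomposition into right $T$-submodules, so $T$ is a direct summand of $S$ as a right $T$-module. Hence, if $S$ is left artinian (noetherian), Proposition \ref{prop:module}(2) yields that $T$ is left artinian (noetherian); the right version is symmetric. The substantive point is the finiteness of $H$. Here I would first record a general reduction: for any subgroup $K \le H$, grouping the basis $H$ into left cosets gives $S = \bigoplus_{h_0 K} h_0 (T *_\alpha K)$ as right $(T *_\alpha K)$-modules, with $T *_\alpha K$ itself one of the summands. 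Thus Proposition \ref{prop:module}(2) shows that every $T *_\alpha K$ inherits the (left/right) artinian property from $S$.

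Applying this with $K = \langle h \rangle$ for an element $h$ of infinite order produces the skew Laurent polynomial ring $T *_\alpha \langle h \rangle \cong T[x,x^{-1};\alpha_h]$, which by a standard fact is never one-sided artinian over a nonzero ring; hence every element of $H$ must have finite order. The hard part is then to rule out an \emph{infinite torsion} group $H$, and this is where the argument becomes genuinely delicate. My approach would be to reduce the coefficient ring: since a quotient of a left artinian ring is again left artinian and $\alpha$ preserves $J(T)$, one may pass to $\overline{T} = T/J(T)$ and the induced crossed product $\overline{T} *_{\overline{\alpha}} H$, and then, isolating a single simple artinian factor $M_n(D)$ of $\overline{T}$ via a central idempotent and reducing to the division ring $D$ by Morita-type considerations, reach a crossed product $D *_\beta H$ of a division ring by $H$. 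For such a crossed product one wants to show that an infinite (torsion) $H$ forces an infinite strictly descending chain of left ideals, using the finite-conjugate ($\Delta$-)subgroup machinery together with the fact that left ideals are left $D$-subspaces. Controlling infinite torsion groups over a division ring is the main obstacle and constitutes the core of the theorem; for this final step one may instead simply invoke \cite{park1979}.
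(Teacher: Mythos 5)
The paper offers no argument here at all: Theorem \ref{thm:park} is stated as an imported result and its ``proof'' is the single sentence citing \cite[Thm.\ 3.3]{park1979}. Your proposal reconstructs a genuine proof skeleton, and everything you actually carry out is correct: the ``if'' direction via $S=\bigoplus_{h\in H}Th$ and Proposition \ref{prop:module}(1),(4); the descent of artinianity to $T$ and, via the coset decomposition, to $T*_\alpha K$ for any subgroup $K$, using Proposition \ref{prop:module}(2) with the correct left/right pairing; and the elimination of elements of infinite order through skew Laurent rings (for which, incidentally, you could quote the paper's own Theorem \ref{thm:nastasescu}: a strongly $\mathbb{Z}$-graded artinian ring must be finitely generated over its degree-zero part, which $T[x,x^{-1};\alpha_h]$ is not). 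What your proof buys over the paper's is that it isolates exactly where the difficulty lives. But be clear that it is not a proof of the theorem: the case of an infinite torsion group is the entire content of Park's result, and your sketch of it (pass to $T/J(T)$, isolate a simple factor, reduce to $D*_\beta H$, then ``use $\Delta$-methods'') cannot be completed by lattice-of-subgroups arguments alone --- Tarski monsters are infinite torsion groups with only finitely many subgroup-chain lengths, so no chain of ideals indexed by subgroups can work, and one also has to handle the fact that $H$ may permute the central idempotents of $T/J(T)$ so that a single simple factor is not $H$-stable. Since you end by invoking \cite{park1979} for precisely this step, your argument and the paper's ultimately rest on the same external source; yours is simply a more informative wrapper around that citation, not an independent proof.
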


\begin{proof}
This is a result by Park \cite[Thm. 3.3]{park1979}
\end{proof}

\begin{thm}\label{thm:zelmanov}
Suppose $M$ is a monoid.  
Then the monoid algebra 
$T[M]$ is left/right artinian if and only if $M$ is finite
and $T$ is left/right artinian.
\end{thm}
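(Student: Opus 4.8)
The plan is to prove Theorem~\ref{thm:zelmanov} by reducing the monoid case to the known group case, namely Theorem~\ref{thm:park} specialized to trivial actions (i.e.\ a group algebra $T[H]$ is left/right artinian if and only if $H$ is finite and $T$ is left/right artinian). The ``if'' direction is the easy half: if $M$ is finite and $T$ is left/right artinian, then $T[M]$ is a finitely generated free left/right $T$-module (with basis indexed by $M$), hence a finite direct sum of copies of $T$, and therefore left/right artinian by Proposition~\ref{prop:module}(4). So the substance lies entirely in the ``only if'' direction.

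For the forward direction I would assume $T[M]$ is (say) left artinian and deduce that $M$ is finite and $T$ is left artinian. First I would extract artinianity of $T$: the natural copy $T = T \cdot 1_M$ sits inside $T[M]$ as $(T[M])_{1_M}$, and since $M$ is a monoid the identity $1_M$ is an idempotent making $T$ a corner ring $1_{T[M]} \, T[M] \, 1_{T[M]}$ in the object-unital grading sense; I would invoke Proposition~\ref{prop:module}(2) (with $T$ a unital direct summand of $T[M]$ as a module over itself) to conclude $T$ is left artinian. The harder part is showing $M$ is finite. My plan here is to produce a strictly descending chain of left ideals of $T[M]$ from any infinite subset of $M$, mimicking the idempotent argument in Proposition~\ref{prop:finite}: if $\{m_1, m_2, m_3, \ldots\}$ are distinct elements of $M$, the left ideals $I_k = \sum_{j \ge k} T[M]\, m_j$ should form a strictly descending chain, contradicting the descending chain condition.

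The main obstacle is exactly this last step, and it is more delicate than in the idempotent-indexed setting, because the elements $m_j$ of a general monoid are not orthogonal and the products $T[M]\, m_j$ can interact: one must verify that $m_k \notin I_{k+1}$, i.e.\ that $m_k$ genuinely drops out when we pass from $I_k$ to $I_{k+1}$. For a general monoid this can fail (left multiplication need not preserve the ``leading term''), so a purely elementary chain argument may not go through directly. The cleaner route, which I expect the paper to take, is to pass through the object-unital category-graded machinery already developed: view $M$ as a one-object category $G$ with $G(a) = M$, so that $T[M]$ is the category algebra $T[G]$ with $G$ hom-set strong (trivially, since it has a single object), and then apply Proposition~\ref{prop:localskew} or Theorem~\ref{thm:propertiesagain}(2). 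That reduces left/right artinianity of $T[M]$ to finiteness of $G_0$ (automatic here) together with left/right artinianity of the single corner $S_{G(a)} = T[M]$ itself, which is circular for the one-object case, so I would instead reduce to Theorem~\ref{thm:park}.

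Concretely, the most efficient plan is: regard $M$ as a monoid and note that Theorem~\ref{thm:park} with the trivial homomorphism $\alpha : H \to \operatorname{Aut}(T)$ already settles the \emph{group} case $T[H]$. To get from groups to monoids, I would show that if $T[M]$ is left artinian then $M$ must in fact be finite by a direct contradiction: were $M$ infinite, the descending chain of left ideals $I_k$ above must eventually stabilize, forcing $m_k \in I_{k+1}$ for large $k$; expanding this membership relation in the $T$-basis indexed by $M$ and comparing $M$-components yields a relation of the form $m_k = \sum_{j > k} t_j\, n_j m_j$ with $t_j \in T$ and $n_j \in M$, and I would argue that choosing the $m_j$ to be \emph{distinct} and tracking which monoid elements appear forces a contradiction once the indexing is done carefully (for instance by well-ordering an infinite subset and taking $m_j$ so that no $m_k$ lies in the submonoid-ideal generated by later terms). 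The genuinely hard point is guaranteeing such a choice of infinitely many elements exists in an arbitrary infinite monoid; I expect this is where one must either restrict attention to the structure forced by artinianity or cite the underlying group-theoretic result, and it is the step I would flag as the crux of the argument.
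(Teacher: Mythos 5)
The paper does not prove this statement at all: Theorem~\ref{thm:zelmanov} is quoted verbatim from the literature, with the proof consisting of the single citation to Zelmanov's 1977 paper (the Corollary on p.~562 of \emph{Semigroup algebras with identities}). So there is no internal argument to compare against; the question is only whether your proposed proof stands on its own, and it does not. The ``if'' direction is fine (finite free module over an artinian ring), but the crux you yourself flag --- that left/right artinianity of $T[M]$ forces $M$ to be finite --- is left genuinely open. Your descending chain $I_k=\sum_{j\ge k}T[M]\,m_j$ is not strictly descending in general; worse, it can be \emph{constant}: if $M$ is an infinite group, then $T[M]\,m_j=T[M]$ for every $j$, so all $I_k$ equal $T[M]$ and the argument yields nothing, even though the theorem is true in that case (this is Park's theorem with trivial action, and already requires a nontrivial proof). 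This shows the obstruction is not a matter of ``indexing done carefully'' or well-ordering: the interaction of left multiplication with the chosen elements is exactly the content of the theorem, and the reduction from monoids to groups that you hope for is the hard semigroup-theoretic step that Zelmanov's paper supplies. Your proposal correctly identifies where the difficulty lies but does not resolve it, so as a proof it has a fatal gap; as a matter of exposition, the honest move (and the paper's move) is to cite Zelmanov rather than attempt an elementary chain argument.

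Two smaller points. First, your extraction of artinianity of $T$ via Proposition~\ref{prop:module}(2) is correct but slightly misdescribed: $T\cdot 1_M$ is a direct summand of $T[M]$ as a left/right $T$-module (complement $\bigoplus_{m\ne 1_M}T\,m$), which is what that proposition needs; there is no nontrivial corner here since $1_M$ is the identity of $T[M]$. Second, your detour through Proposition~\ref{prop:localskew} is, as you note, circular for a one-object category --- the category-graded machinery in this paper is designed to reduce the multi-object case to the single-object case, and Theorem~\ref{thm:zelmanov} is precisely the single-object input it consumes (in the proof of Theorem~\ref{thm:generalizationzelmanov}), not something it can produce.
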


\begin{proof}
This is a result by Zelmanov 
\cite[Corollary at p. 562]{zelmanov1977}.
\end{proof}

\begin{prop}\label{prop:Gfinitenecessary}
If the category algebra $T[G]$ is left/right artinian, 
then $T$ is left/right artinian and $G$ is finite.
\end{prop}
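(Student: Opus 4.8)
The plan is to combine the necessary conditions already extracted in Proposition~\ref{prop:impliess} with Zelmanov's theorem (Theorem~\ref{thm:zelmanov}), and then to prove separately that every hom-set of $G$ is finite. Write $S := T[G]$; recall that $S$ is object unital and strongly $G$-graded with complete set of idempotents $\{ 1_{S_a} \}_{a \in G_0}$, where $1_{S_a} = 1_T a$, and that $1_{S_a} S 1_{S_b} = S_{G(a,b)}$ by (\ref{eq:1A}). Assuming $S$ is left artinian (the right case being symmetric, and $G_0 \neq \emptyset$ since otherwise $G$ is trivially finite), Proposition~\ref{prop:impliess} gives that $G_0$ is finite and that, for each $a \in G_0$, the skew monoid ring $R_a *_{\alpha(a)} G(a)$ is left artinian. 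Since all rings in $R$ equal $T$ and all maps in $\alpha$ are identities, this skew monoid ring is just the monoid algebra $T[G(a)]$, so Theorem~\ref{thm:zelmanov} yields that $T$ is left artinian and that each monoid $G(a)$ is finite. In particular each $S_a = T[G(a)]$ is itself a left artinian ring, and it is free of finite rank $|G(a)|$ as a left module over its unital subring $Ta \cong T$.

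It remains to show that each hom-set $G(a,b)$ is finite; then $G_1 = \bigcup_{a,b \in G_0} G(a,b)$ is a finite union of finite sets and $G$ is finite. Fix $a,b \in G_0$ and set $M := S 1_{S_b}$. Since $G_0$ is finite, $S$ is unital with $1_S = \sum_{c \in G_0} 1_{S_c}$, so $S = \bigoplus_{c \in G_0} S 1_{S_c}$ as left $S$-modules and $M$ is a direct summand of $S$; hence $M$ is an artinian left $S$-module. I claim that the corner $S_{G(a,b)} = 1_{S_a} S 1_{S_b}$, which is a left $S_a$-module, is artinian as such. To see this, consider the inclusion-preserving map $N \mapsto SN$ from left $S_a$-submodules of $S_{G(a,b)}$ to left $S$-submodules of $M$ (indeed $SN \subseteq S 1_{S_a} S 1_{S_b} \subseteq S 1_{S_b} = M$). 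For any such $N$ one has $1_{S_a}(SN) = 1_{S_a} S 1_{S_a} N = S_a N = N$, using $N = 1_{S_a} N$ and that $N$ is an $S_a$-module. Thus $N$ is recovered from $SN$, so the map is injective and $N_1 \supsetneq N_2$ forces $SN_1 \supsetneq S N_2$. A strictly descending chain of $S_a$-submodules of $S_{G(a,b)}$ would therefore produce one in the artinian module $M$, a contradiction; hence $S_{G(a,b)}$ is an artinian left $S_a$-module.

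Finally I descend from $S_a$ to $T$. As $S_a$ is a left artinian ring, the artinian left $S_a$-module $S_{G(a,b)}$ is finitely generated (a standard consequence of the Hopkins--Levitzki theorem; see e.g.\ \cite{rowen1998}). Since $S_a$ is finitely generated as a left $T$-module, it follows that $S_{G(a,b)}$ is finitely generated as a left $T$-module. On the other hand, the grading gives $S_{G(a,b)} = \bigoplus_{g \in G(a,b)} T g$, and the left action of $Ta \cong T$ is $(ta)(t'g) = tt' g$, so $S_{G(a,b)}$ is a \emph{free} left $T$-module on the set $G(a,b)$. A free module over the nonzero ring $T$ is finitely generated only if its rank is finite, whence $G(a,b)$ is finite, as required. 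The main obstacle is precisely this control of the off-diagonal hom-sets: without a hom-set strong hypothesis one cannot invoke the poset isomorphism of Proposition~\ref{prop:lattice}, so one must instead transport the descending chain condition into the corner $S_{G(a,b)}$ by the one-sided argument above, and then use finiteness of $G(a)$ to pass down to $T$.
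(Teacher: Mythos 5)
Your proof is correct, but it takes a genuinely different route from the paper's. The paper disposes of this proposition in one line: Proposition~\ref{prop:impliess} gives that $G_0$ is finite and each $T[G(a)]$ is left/right artinian, Theorem~\ref{thm:zelmanov} then gives that $T$ is left/right artinian and each monoid $G(a)$ is finite, and finally Proposition~\ref{prop:finitecategory} converts ``$G_0$ finite and all $G(a)$ finite'' into ``$G$ finite.'' You follow the same first two steps but replace the third: instead of citing Proposition~\ref{prop:finitecategory} (which carries the hypothesis that $G$ is hom-set strong), you bound each off-diagonal hom-set directly by transporting the descending chain condition from the summand $S1_{S_b}$ into the corner $S_{G(a,b)}$ via the injective, strictly inclusion-preserving map $N \mapsto SN$ with retraction $N = 1_{S_a}(SN)$, then invoking Hopkins--Levitzki over the left artinian ring $S_{G(a)}=T[G(a)]$ and the freeness of $S_{G(a,b)}$ as a left $T$-module on the basis $G(a,b)$. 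What this buys you is real: the proposition as stated places no hom-set strong hypothesis on $G$, and your argument proves it for an arbitrary small category, whereas the paper's cited chain of results literally only covers the hom-set strong case (which is all that is needed for Theorem~\ref{thm:generalizationzelmanov}, but is not what the proposition asserts). Your observation at the end correctly identifies why Proposition~\ref{prop:lattice} is unavailable here. Two cosmetic remarks: you write $S_a$ for the corner ring $1_{S_a}S1_{S_a}=S_{G(a)}$, which collides with the paper's use of $S_a$ for the degree-$a$ homogeneous component $Ta$ in Section~\ref{sec:chaincategory}, so you should write $S_{G(a)}$ throughout; and the degenerate case $G_0=\emptyset$ (where $T[G]=\{0\}$ and the conclusion about $T$ does not follow) is an edge case that both you and the paper implicitly exclude.
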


\begin{proof}
This follows from Theorem \ref{thm:zelmanov} and
Propositions \ref{prop:finitecategory} and
\ref{prop:impliess}.
\end{proof}

\subsubsection*{Proof of Theorem \ref{thm:generalizationpark}
and Theorem \ref{thm:generalizationzelmanov}}
This follows from 
Proposition \ref{prop:localskew},
\linebreak
Theorem \ref{thm:park},
Theorem \ref{thm:zelmanov}
and Proposition \ref{prop:Gfinitenecessary}.
\qed

%\begin{ex}\label{ex:Snotstronglygraded}
%There are examples of rings which are 
%hom-set-strongly
%\linebreak 
%$G$-graded where $G$ is a hom-set-strong category 
%which is not a groupoid.
%\linebreak
%Indeed, suppose $M$ is a monoid and let $X$ be a nonempty
%set. Let $G$ be the category $MX$ from 
%Example \ref{ex:notgroupoid}. 
%\end{ex}

\end{document}